\documentclass[11pt]{article}
\usepackage{amsthm}
\usepackage{amsmath,amsfonts,amssymb,graphics,verbatim,fullpage}
\usepackage[usenames,dvipsnames]{color}

\ifx\pdfoutput\undefined
\usepackage[dvips,bookmarks]{hyperref}
\else
\usepackage{hyperref}
 \pdfadjustspacing=1
\fi
\hypersetup{%
pdftitle={},
pdfauthor={},
pdfkeywords={},
bookmarks=true,%
bookmarksnumbered=true,
pdfstartview={FitH},
colorlinks=true,
citecolor=Plum,
linkcolor=RedOrange,
urlcolor=RawSienna,
bookmarksopen=true,%
bookmarksopenlevel=1,
unicode=true,%
breaklinks=true,%
}%

\def\te#1{\mathrm{e}^{#1}}
\def\td{\text{\rm d}}
\numberwithin{equation}{section}

\newtheorem{theorem}{Theorem}[section]
\newtheorem{lem}{Lemma}[section]

\newcommand{\eqdistr}{\stackrel{{\cal D}}{=}}

\theoremstyle{definition}
\newtheorem*{remark}{Remark}
\newtheorem{Ex}{Example}

\begin{document}
\title{A L\'{e}vy input model with additional state-dependent services}
\author{Zbigniew Palmowski\footnote{University of Wroc\l aw, pl.\ Grunwaldzki 2/4, 50-384 Wroc\l aw, Poland, E-mail: zpalma@math.uni.wroc.pl} \qquad Maria Vlasiou\footnote{Eurandom and Department of Mathematics and Computer Science; Eindhoven University of Technology; P.O.\ Box 513; 5600 MB Eindhoven; The Netherlands, E-mail: m.vlasiou@tue.nl}}
\maketitle

\begin{abstract}
We consider a queuing model with the workload evolving between consecutive i.i.d.\ exponential timers $\{e_q^{(i)}\}_{i=1,2,\ldots}$ according to a spectrally positive L\'{e}vy process $Y(t)$ which is reflected at $0$. When the exponential clock $e_q^{(i)}$ ends, the additional state-dependent service requirement modifies the workload  so that the latter is equal to $F_i(Y(e_q^{(i)}))$ at epoch $e^{(1)}_q+\cdots+e^{(i)}_q$ for some random nonnegative i.i.d.\ functionals $F_i$. In particular,  we focus on the case when $F_i(y)=(B_i-y)^+$, where $\{B_i\}_{i=1,2,\ldots}$ are i.i.d.\ nonnegative random variables. We analyse the steady-state workload distribution for this model.
\end{abstract}

\section{Introduction}

In this paper we focus on a particular queuing system with additional state-dependent services. There has been considerable previous work on queues with state-dependent service and arrival processes; see for example the survey by Dshalalow~\cite{dshalalow97} for several references. The model under consideration involves a reflected  L\'evy process connected to the evolution of the workload. Special cases of L\'evy processes are the compound Poisson process, the Brownian motion, linear drift processes, and independent sums of the above. The literature on queueing systems driven by L\'evy processes is rather limited; see e.g.\ Bekker et al.~\cite{bekker} for references.

Specifically, in this paper we consider a storage/workload model in which the workload evolves according to a reflected at zero spectrally positive L\'{e}vy process $Y(t)$. That is, let $X(t)$ be a spectrally positive L\'{e}vy process (a L\'{e}vy process with only positive jumps) modelling the input minus the output of the process and define $-\inf_{s\leqslant 0} X(s)=0$ and $X(0)=x\geqslant 0$. Then we have that $Y(t)=X(t)-\inf_{s\leqslant t} X(s)$ (where $Y(0)=x$ for some initial workload $x\geqslant 0$). In addition, at exponential times with intensity $q$, given by $\{e_q^{(i)}\}_{i=1,2,\ldots}$, the workload is ``reset'' to a certain level, depending on the workload level before the exponential clock ends. Specifically, at epoch $t=e^{(1)}_q+\cdots+e^{(i)}_q$ the workload $V(t)$ equals $F_i(V(t-))$ for some random nonnegative i.i.d.\  functionals $F_i$.

The main goal of our paper is to derive the stationary distribution of the workload $V(t)$ for the above-described queuing model. We first identify the stationary distribution of the workload at embedded exponential epochs and then extend this result to an arbitrary time by using renewal arguments. We also identify the tail behaviour of the steady-state workload.

This model unifies and extends several related models in various directions. First of all, if $X$ is a compound Poisson process and if $F_i$ is the identity function, then our model reduces to the workload process of the M/G/1 queue. Kella et al.~\cite{Kellatotalclearing} consider a  model with workload removal, which fits into our model by taking $F_i(x)=0$ and by letting the  spectrally positive L\'{e}vy process $X$ be a Brownian motion superposed with an independent compound Poisson component. The added generality allows one to analyse more elaborate mechanisms of workload control, where the exponential times can be seen as review times, during which the workload can be changed to a different level as desired. Allowing for general functions $F_i$ opens up the possibility of optimising such controls, although we do not consider this problem here. Instead of considering the classical compound Poisson input process and a linear output process for the queuing model, one can consider the case in which the L\'evy process is nondecreasing, i.e.\ a subordinator; see for example Bekker et al.~\cite{bekker} and Boxma et al.~\cite{boxma}.

Apart from the wish to unify and extend clearing models, we were also challenged by introducing a continuous-time analogue of the alternating service model considered in Vlasiou et al.~\cite{vlasiou07a,vlasiou05,vlasiou05b,vlasiou04,vlasiou07} that gave rise to the Lindley-type recursion $W\eqdistr (B-A-W)^+$. In particular, we focus on the case when $F_i(y)=(B_i-y)^+$, where $\{B_i\}_{i=1,2,\ldots}$ are i.i.d.\ nonnegative random variables. Hence, at exponential epochs the controlling mechanism leaves only a portion of the workload depending on the size of the workload just prior to the exponential timer. In particular, if $F_i(y)=(B_i-y)^+$, then this mechanism keeps the workload below a generic random size $B$, decreasing it when it is relatively large at the exponential epoch and increasing it when it is much smaller than $B$. This can be viewed as a continuous-time analogue of the above mentioned Lindley-type recursion. In the context of workload control mentioned above, this example can be interpreted as a mechanism that reflects existing storage with respect to an upper bound $B$.

Our results focus on qualitative and quantitative properties of the steady-state workload distribution. We first establish Harris recurrence for the Markov chain embedded at workload adjustment points, yielding the convergence of the workload processes to an invariant distribution. We derive an equation for the invariant distribution of the embedded chain, as well as the invariant distribution of the original process. We use this equation to obtain expressions for the invariant distribution for an example that generalises \cite{bekker,boxma,Kellatotalclearing}. We also investigate the tail behaviour of the steady-state distribution under both light-tailed and heavy-tailed assumptions. All these groups of results have the common theme that we rely on recently obtained results in the fluctuation theory of spectrally positive L\'evy processes.

The paper is organised as follows. In Section~\ref{s:Preliminaries} we introduce a few basic facts concerning spectrally positive L\'{e}vy processes. In Section~\ref{s:embedded} we consider the embedded workload process and derive a recursive equation for its stationary distribution. In Section~\ref{s:arbitrary} we determine the steady-state workload distribution. Later on, in Section~\ref{s:examples} we present some special cases. Finally, in Section \ref{tail} we focus on the tail behaviour of the steady-state workload.

\section{Preliminaries}\label{s:Preliminaries}

Throughout this paper we exclude the case of $X$ with monotone paths. Let the dual process of $X(t)$ be given by $\hat{X}(t)=-X(t)$. The process $\{\hat{X}(s), s\leqslant t\}$ is a spectrally negative L\'{e}vy process and has the same law as the time-reversed process $\{X((t-s)-)-X(t), s\leqslant t\}$. Following standard conventions, let $\underline{X}(t)=\inf_{s\leqslant t}X(s)$, $\overline{X}(t)=\sup_{s\leqslant t}X(s)$ and similarly $\underline{\hat{X}}(t)=\inf_{s\leqslant t}\hat{X}(s)$, and $\overline{\hat{X}}(t)=\sup_{s\leqslant t}\hat{X}(s)$. One can readily see that the processes $Y(t)=X(t)-\underline{X}(t)$ (for $Y(0)=0$) and  $\overline{X}(t)$ (where $X(0)=0$) have the same distribution; see e.g.\ Kyprianou~\cite[Lemma 3.5, p.\ 74]{Kbook}. Moreover,
$$
-\underline{X}(t)\eqdistr \overline{\hat{X}}(t),\qquad \overline{X}(t)\eqdistr -\underline{\hat{X}}(t).
$$

Since the jumps of $\hat{X}$ are all non-positive, the moment generating function $E[\te{\theta \hat{X}(t)}]$ exists for all $\theta \geqslant 0$ and is given by $E[\te{\theta \hat{X}(t)}] = \te{t\psi(\theta)}$ for some function $\psi(\theta)$ that is well defined at least on the positive half-axis where it is strictly convex with the property that $\lim_{\theta\to\infty}\psi(\theta)=+\infty$. Moreover, $\psi$ is strictly increasing on $[\Phi(0),\infty)$, where $\Phi(0)$ is the largest root of $\psi(\theta)=0$. We shall denote the right-inverse function of $\psi$ by $\Phi:[0,\infty)\to[\Phi(0),\infty)$.

Denote by $\sigma$ the Gaussian coefficient and by $\nu$ the L\'{e}vy measure of $\hat{X}$ (note that $\sigma$ is also a Gaussian coefficient of $X$ and that $\Pi(A)=\nu(-A)$ is a jump measure of $X$).  Throughout this paper we assume that the following (regularity) condition is satisfied:
\begin{equation}\label{eq:condW}
\sigma > 0 \quad \text{or}\quad \int_{-1}^0 x \nu(\td x) = \infty \quad \text{or}\quad \nu(\td x)<<\td x,
\end{equation}
where $<< \td x $ means absolutely continuity with respect to the Lebesgue measure.
Moreover, we assume that
\begin{equation}
P_x(\tau_0^-<\infty)=1,
\end{equation}
where
$$
\tau_0^-=\inf\{t\geqslant 0: X(t)\leqslant 0\}.
$$
Finally, $P_x$ denotes the probability measure $P$ under the condition that $X(0)=x$, and $E_x$ indicates the expectation with respect to $P_x$.

\subsection{Scale functions}\label{ss:scale}
For $q\geqslant0$, there exists a function $W^{(q)}: [0,\infty) \to [0,\infty)$, called the {\it $q$-scale function}, that is continuous and increasing with Laplace transform
\begin{equation}\label{eq:defW}
\int_0^\infty \te{-\theta y} W^{(q)} (y)  \td y = (\psi(\theta) - q)^{-1},\qquad\theta > \Phi(q).
\end{equation}
The domain of $W^{(q)}$ is extended to the entire real axis by setting $W^{(q)}(y)=0$ for $y<0$. We mention here some properties of the function $W^{(q)}$ that have been obtained in the literature which we will need later on.

On $(0,\infty)$ the function $y\mapsto W^{(q)}(y)$ is right- and left-differentiable and, as shown in \cite{lam}, under the condition \eqref{eq:condW}, it holds that $y\mapsto W^{(q)}(y)$ is continuously differentiable for $y>0$.

Closely related to $W^{(q)}$ is the function $Z^{(q)}$ given by
$$
Z^{(q)}(y) = 1 + q\int_0^yW^{(q)}(z)\td z.
$$
The name ``$q$-scale function'' for $W^{(q)}$ and $Z^{(q)}$ is justified as these functions are harmonic for the process $\hat{X}$ killed upon entering $(-\infty,0)$. Here we give a few examples of scale functions. For a large number of examples of scale functions see e.g.\ Chaumont et al.~\cite{KyprChaum}, Hubalek and Kyprianou~\cite{KyprHunbalek}, Kyprianou and Rivero~\cite{KyprRivero}.

\begin{Ex}\label{Brownianwithdrift}
If $X(t) = \sigma B(t) - \mu t$ is a Brownian motion with drift $\mu$ (a standard model for small service requirements) then
$$
W^{(q)}(x) = \frac{1}{\sigma^2\delta} [\te{(-\omega+\delta) x} -
\te{-(\omega+\delta)x}],
$$
where $\delta = \sigma^{-2}\sqrt{\mu^2 + 2q\sigma^2}$ and $\omega = \mu/\sigma^2$.
\end{Ex}

\begin{Ex}\label{compound}
Suppose
$$
X(t)= \sum_{i=1}^{N(t)}\sigma_i-pt,
$$
where $p$ is the speed of the server and $\{\sigma_i\}$ are i.i.d.\  service times that are coming according to the Poisson process $N(t)$ with intensity $\lambda$. We assume that all $\sigma_i$ are exponentially distributed with mean $1/\mu$.  Then $\psi(\theta)=p\theta-\lambda \theta/(\mu+\theta)$ and the scale function of the dual $W^{(q)}$ is given by
$$
W^{(q)}(x) = p^{-1}\left(A_+ \te{q^+(q)x} - A_- \te{q^-(q)x}\right),
$$
where $A_\pm = \frac{\mu + q^\pm(q)}{q^+(q)-q^-(q)}$ with
$q^+(q)=\Phi(q)$ and $q^-(q)$ is the smallest root of $\psi(\theta)=q$:
$$
q^\pm(q)=\frac{{q} + \lambda -\mu p \pm \sqrt{({q} + \lambda-\mu p)^2 + 4 p {q}\mu}}{2 p}.
$$
\end{Ex}

\subsection{Fluctuation identities}\label{ss:identities}

The functions $W^{(q)}$ and $Z^{(q)}$ play a key role in the fluctuation theory of reflected processes as shown by the following identity (see Bertoin~\cite[Theorem VII.4 on p.\ 191 and (3) on p.\ 192]{bert96} or Kyprianou and Palmowski~\cite[Theorem 5]{KypPal}).

\begin{lem}\label{exitident}
For $\alpha >0$,
$$
E\left( \te{-\alpha \overline{X}(e_{q})}\right) =\frac{q(\alpha -\Phi \left( q\right) )}{\Phi \left( q\right) (\psi \left( \alpha \right) -q)},
$$
which is equivalent to
\begin{equation*}
P(\overline{X}(e_q)\in \td x)=\frac{q}{\Phi \left( q\right) } W^{(q)}(\td x)-qW^{(q)}(x)\td x,\qquad x>0.
\end{equation*}
Moreover, $-\underline{X}(e_q)$ follows an exponential distribution with parameter $\Phi(q)$.
\end{lem}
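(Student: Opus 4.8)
The plan is to transfer the problem to the spectrally negative dual $\hat X=-X$, for which the relevant exit quantities are classical, and then come back through the duality relations $\overline X(e_q)\eqdistr-\underline{\hat X}(e_q)$ and $-\underline X(e_q)\eqdistr\overline{\hat X}(e_q)$ recorded in Section~\ref{s:Preliminaries}. Throughout I would use the memoryless property in the form $P(\hat\tau<e_q\mid\hat\tau)=\te{-q\hat\tau}$ for a stopping time $\hat\tau$ independent of $e_q$, so that $P(\overline{\hat X}(e_q)\ge x)=P(\hat\tau_x^+\le e_q)=E[\te{-q\hat\tau_x^+}]$ and likewise for $-\underline{\hat X}$, with $\hat\tau_x^+=\inf\{t\ge0:\hat X(t)>x\}$.

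For the exponential claim: since $\hat X$ has no positive jumps, it reaches any level $x>0$ continuously, so applying optional stopping to the martingale $\te{\Phi(q)\hat X(t)-qt}$ (recall $\psi(\Phi(q))=q$) gives $E[\te{-q\hat\tau_x^+}]=\te{-\Phi(q)x}$. Hence $P(\overline{\hat X}(e_q)\ge x)=\te{-\Phi(q)x}$, i.e.\ $\overline{\hat X}(e_q)$ is exponential with parameter $\Phi(q)$, and by duality so is $-\underline X(e_q)$.

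For the law of $\overline X(e_q)\eqdistr-\underline{\hat X}(e_q)$ I would invoke the classical two-sided exit identities for spectrally negative L\'evy processes (Bertoin~\cite{bert96}, Kyprianou~\cite{Kbook}): with $\hat\tau_0^-=\inf\{t:\hat X(t)<0\}$, for $0\le x<a$,
$$E_x\!\left[\te{-q\hat\tau_a^+};\hat\tau_a^+<\hat\tau_0^-\right]=\frac{W^{(q)}(x)}{W^{(q)}(a)},\qquad E_x\!\left[\te{-q\hat\tau_0^-};\hat\tau_0^-<\hat\tau_a^+\right]=Z^{(q)}(x)-\frac{W^{(q)}(x)}{W^{(q)}(a)}Z^{(q)}(a).$$
Letting $a\to\infty$ and using the large-argument asymptotics $W^{(q)}(a)\sim\te{\Phi(q)a}/\psi'(\Phi(q))$ (a consequence of \eqref{eq:defW}, whose right-hand side has a simple pole at $\Phi(q)$) together with $Z^{(q)}(a)/W^{(q)}(a)\to q/\Phi(q)$ gives $E_x[\te{-q\hat\tau_0^-}]=Z^{(q)}(x)-\frac{q}{\Phi(q)}W^{(q)}(x)$. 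By spatial homogeneity the first passage of $\hat X$ (started at $0$) below $-x$ satisfies $E[\te{-q\hat\tau_{-x}^-}]=Z^{(q)}(x)-\frac{q}{\Phi(q)}W^{(q)}(x)$, so $P(-\underline{\hat X}(e_q)>x)=Z^{(q)}(x)-\frac{q}{\Phi(q)}W^{(q)}(x)$. Differentiating in $x$, using $(Z^{(q)})'=qW^{(q)}$ and the $C^1$ property of $W^{(q)}$ on $(0,\infty)$ under \eqref{eq:condW}, yields the density $P(\overline X(e_q)\in\td x)=\frac{q}{\Phi(q)}W^{(q)}(\td x)-qW^{(q)}(x)\,\td x$ for $x>0$, the possible atom at $0$ being $\frac{q}{\Phi(q)}W^{(q)}(0+)$.

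Finally, the two displayed formulas are equivalent by a short Laplace-transform computation: from \eqref{eq:defW}, $\int_0^\infty\te{-\alpha x}W^{(q)}(x)\,\td x=(\psi(\alpha)-q)^{-1}$ and hence, integrating by parts, $\int_{[0,\infty)}\te{-\alpha x}W^{(q)}(\td x)=\alpha(\psi(\alpha)-q)^{-1}$ for $\alpha>\Phi(q)$; integrating $\te{-\alpha x}$ against the density gives $\frac{q}{\Phi(q)}\frac{\alpha}{\psi(\alpha)-q}-\frac{q}{\psi(\alpha)-q}=\frac{q(\alpha-\Phi(q))}{\Phi(q)(\psi(\alpha)-q)}$, and both sides extend analytically to all $\alpha>0$. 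The only genuinely non-elementary ingredient — and thus the crux — is the two-sided exit step together with the asymptotics of $W^{(q)}$; granted those, everything else is the memoryless property and one integration by parts. One could alternatively read both laws directly off the Wiener--Hopf factorisation of $\hat X$, whose ascending factor is trivial since $\hat X$ creeps upward, the work then being hidden in the descending factor.
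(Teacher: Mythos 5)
Your proof is correct. Note that the paper itself offers no proof of this lemma — it simply cites Bertoin (Theorem VII.4 and the resolvent formula on p.~192) and Kyprianou--Palmowski — and those sources derive the identity in essentially the way you do: the exponential law of $\overline{\hat X}(e_q)$ from the martingale $\te{\Phi(q)\hat X(t)-qt}$ and creeping upward, and the law of $-\underline{\hat X}(e_q)$ by letting $a\to\infty$ in the two-sided exit identities, using $W^{(q)}(a)\sim\te{\Phi(q)a}/\psi'(\Phi(q))$. Your transform computation and the observation that $\psi(\alpha)-q$ has no zero on $(0,\infty)$ other than the removable one at $\alpha=\Phi(q)$ correctly settle the equivalence of the two displayed formulas, so the reconstruction is complete as written.
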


The scale function gives also the density $r^{(q)}(x,y)=R^{(q)}(x,\td y)/\td y$ of the $q$-potential measure
$$
R^{(q)}(x,\td y):=\int_0^\infty \te{-qt}P_x(X(t)\in \td y, \tau^-_0>t)\td t
$$
of the process $X$ killed on exiting $[0,\infty)$ when initiated from $x$. See also Pistorius~\cite{Pistoriusthesis}.

\begin{lem}\label{potentialkilled} 
Under \eqref{eq:condW}, we have that
$$
r^{(q)}(x,y)= \int_{[(x-y)^+,x]}\te{-\Phi(q) z}\left[W^{(q)\prime}(y-x+z)-\Phi(q)W^{(q)}(y-x+z)\right]\td z.
$$
\end{lem}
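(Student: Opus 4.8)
The plan is to derive the $q$-potential density of $X$ killed on exiting $[0,\infty)$ from the corresponding object for the dual process $\hat X = -X$, which is spectrally negative, since for spectrally negative L\'evy processes the $q$-resolvent of the process killed on exiting a half-line is classical (see Bertoin~\cite{bert96} or Kyprianou~\cite{Kbook}). Concretely, write $\tau_a^+ = \inf\{t\geqslant 0: X(t)\geqslant a\}$ and recall that killing $X$ on exiting $[0,\infty)$ from $x$ corresponds, under the spatial reflection $y\mapsto -y$ composed with a shift, to killing $\hat X$ on exiting $(-\infty, x]$. For the spectrally negative process $\hat X$ the well-known identity (Kyprianou~\cite[Theorem 8.7]{Kbook}) gives that the $q$-resolvent density of $\hat X$ killed on first passage above a level, started from a point, is expressed through $W^{(q)}$ and $Z^{(q)}$; after the change of variables this yields a first candidate expression for $r^{(q)}(x,y)$.

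First I would record the exact duality statement: from the preliminaries, $-\underline X(e_q)$ is exponential with parameter $\Phi(q)$ and $\overline X(e_q)$ has the law given in Lemma~\ref{exitident}. The cleanest route is the excursion/last-passage decomposition: for the killed process one splits according to the overall minimum before killing. Using the Markov property at the time the infimum is attained together with the fact that, up to that time, $X$ behaves like the dual reflected at its supremum, one gets a convolution formula
$$
r^{(q)}(x,y) = \int_0^{x\wedge y} \te{-\Phi(q) z}\,\bigl[W^{(q)\prime}(y-x+z)-\Phi(q)W^{(q)}(y-x+z)\bigr]\,\td z
$$
after reorganising; the term $\te{-\Phi(q)z}$ comes from the exponential law of the running infimum contribution and the bracket $W^{(q)\prime}-\Phi(q)W^{(q)}$ is (up to normalisation) the density of $\overline X(e_q)$ from Lemma~\ref{exitident}. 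One must be careful with the limits of integration: the increment $y-x+z$ must be nonnegative (so $z\geqslant (x-y)^+$) and the excursion away from the infimum cannot overshoot past the starting level in a way that re-enters $(-\infty,0)$, which caps $z$ at $x$; this gives the stated range $z\in[(x-y)^+,x]$.

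The main obstacle is making the excursion-theoretic bookkeeping rigorous: one needs to justify that the density exists (this is exactly where condition~\eqref{eq:condW} enters, via the continuous differentiability of $W^{(q)}$ quoted from \cite{lam}), and one needs to keep track of the Gaussian/creeping component versus the jump component when decomposing at the infimum, so that no atom is lost. I would handle this by first establishing the Laplace-transform version in both the $x$ and $y$ variables --- integrating the claimed $r^{(q)}(x,y)$ against $\te{-\theta y}\,\td y$ and then against $\te{-\beta x}\,\td x$, using \eqref{eq:defW} for $\int \te{-\theta y}W^{(q)}(y)\,\td y$ --- and checking that the result matches the known double Laplace transform of $R^{(q)}(x,\td y)$ for spectrally positive processes (equivalently, of the spectrally negative resolvent via duality), which is a routine but somewhat lengthy partial-fractions computation. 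Uniqueness of Laplace transforms together with the continuity of $r^{(q)}(\cdot,\cdot)$ under \eqref{eq:condW} then upgrades the transform identity to the pointwise statement, completing the proof.
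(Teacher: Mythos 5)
Your second paragraph contains exactly the decomposition the paper uses, and you identify correctly where each ingredient comes from: the factor $\te{-\Phi(q)z}$ is the exponential law of $-\underline{X}(e_q)$, the bracket $W^{(q)\prime}-\Phi(q)W^{(q)}$ is the density of $\overline{X}(e_q)$ from Lemma~\ref{exitident}, and the limits $z\in[(x-y)^+,x]$ come from requiring $y-x+z\geqslant 0$ and from the infimum not dropping below $0$. Where you diverge from the paper is in how you propose to make this rigorous. The obstacle you identify --- ``excursion-theoretic bookkeeping'' at the time of the infimum, tracking creeping versus jump components --- is not actually there: the paper simply writes $R^{(q)}(x,\td y)=q^{-1}P_x(X(e_q)\in\td y,\,\underline{X}(e_q)\geqslant 0)$ and invokes the Wiener--Hopf factorisation at the independent exponential time $e_q$, which states that $X(e_q)-\underline{X}(e_q)$ and $\underline{X}(e_q)$ are independent with known laws (duality gives $X(e_q)-\underline{X}(e_q)\eqdistr\overline{X}(e_q)$). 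That single fact replaces both the last-passage decomposition and your proposed fallback of matching double Laplace transforms, which you leave unexecuted and which would be considerably more work. Your opening paragraph's route through the resolvent of the dual spectrally negative process killed on up-crossing a level is also abandoned mid-proposal and is not needed. Condition~\eqref{eq:condW} enters only at the very end, exactly as you say, to replace the measure $W^{(q)}(\td y - x + z)$ by the density $W^{(q)\prime}(y-x+z)\,\td y$. So the substance is right, but you should commit to the Wiener--Hopf convolution as the proof itself rather than as a heuristic to be verified by transforms.
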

\begin{proof}
We start by noting that for all $x, y >0$ and $q > 0$,
$$
R^{(q)}(x, \td y)=  \frac{1}{q} P_x(X(e_q)\in \td y,\underline{X}(e_q)\geqslant 0).
$$
From the Wiener-Hopf factorisation of the L\'evy process we have that $X(e_q)-\underline{X}(e_q)$ is independent of $\underline{X}(e_q)$. This leads to
\begin{align*}
R^{(q)}(x, \td y) &= \frac{1}{q}P(X(e_q)\in \td y - x,\underline{X}(e_q)\geqslant -x) \\
               &= \frac{1}{q} P((X(e_q)-\underline{X}(e_q)) + \underline{X}(e_q)\in \td y - x,-\underline{X}(e_q)\leqslant x) \\
               &= \frac{1}{q} \int_{[(x-y)^+,x]} P(-\underline{X}(e_q)\in \td z) P(X(e_q)-\underline{X}(e_q)\in \td y - x + z).
\end{align*}
In the above expression, we integrate over the position of $-\underline{X}(e_q)$ which is not less than $0$ under $P_x$ (this leads to the condition that $ -\underline{X}(e_q)\leqslant x$ under $P=P_0$) and it is less than $X(e_q)=y$ under $P_x$ (hence, $ -\underline{X}(e_q)>x-y$ under $P$). Note that we always have that $ -\underline{X}(e_q)\geqslant 0$ under $P$, and thus the above integral is equal to the integral over $[0,x]$ when $y>x$.

Recall, that by duality $X(e_q)-\underline{X}(e_q)$ is equal in distribution to $\overline{X}(e_q)$ which has been identified in
Lemma~\ref{exitident}. In addition, the law of $-\underline{X}(e_q)$ is exponentially distributed with parameter $\Phi(q)$. We may, therefore, rewrite the expression for $R^{(q)}(x, \td y)$ as follows:
\begin{equation}\label{Rqgeneral}
R^{(q)}(x, \td y) =\int_{[(x-y)^+,x]}\te{-\Phi(q) z}\left[ W^{(q)}(\td y-x+z)-\Phi(q)W^{(q)}(y-x+z)\td y\right]\td z.
\end{equation}
Under Condition \eqref{eq:condW}, $W^{(q)}$ is differentiable and hence the last equality completes the proof.
\end{proof}

\begin{remark}
Lemma (\ref{potentialkilled}) and similar results can be proven without the assumption made in \eqref{eq:condW}, but at the cost of more complex expressions. We would have to use \eqref{Rqgeneral} instead of the much nicer form for $r^{(q)}(x,y)\td y$.
\end{remark}

\section{Equilibrium distribution of the embedded process}\label{s:embedded}
We consider the workload process at the embedded epochs $e^{(1)}_q+\cdots+e^{(n)}_q$, just after the additional service arrives. Note that this process is a Markov chain $\{Z_n, n\in \mathbf{N}\}$ with transition kernel:
\begin{equation}\label{kernel}
k(x, \td y)=P_x(F(Y(e_q))\in \td y)=\int P_x(f(Y(e_q))\in \td y) \td P^F(f),
\end{equation}
where $P^F$ is the law of $F$.

\begin{lem}\label{transitionreflected}
We have that
$P_x(Y(e_q)\in \td y)=h(x,y)\td y+\te{-\Phi(q)x}W^{(q)}(0)\delta_0(\td y)$, where
\begin{equation}
h(x,y)=qr^{(q)}(x,y)+ \te{-\Phi(q)x}\left[\frac{q}{\Phi
\left(q\right) } W^{(q)\prime}(y)-qW^{(q)}(y)\right],\label{maindensity}
\end{equation}
and where the first increment $qr^{(q)}(x,y)$ is given in
Lemma~\ref{potentialkilled}.
\end{lem}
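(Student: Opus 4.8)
The plan is to decompose the distribution of $Y(e_q)$ started from $x$ into the contribution from paths that never hit zero before time $e_q$ and the contribution from paths that do. Recall that $Y(t)=X(t)-\underline{X}(t)$ with $Y(0)=x$, so on the event $\{\tau_0^->e_q\}$ we have $\underline{X}(e_q)\geqslant 0$ and hence $Y(e_q)=X(e_q)$; this part is exactly the killed $q$-potential measure, contributing $q\,r^{(q)}(x,y)\,\td y$ after multiplying by $q$ to pass from the $q$-potential to the law of the position at an independent exponential time (as in the first line of the proof of Lemma~\ref{potentialkilled}). So the first term in $h(x,y)$ is immediate from Lemma~\ref{potentialkilled}.

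For the complementary event $\{\tau_0^-\leqslant e_q\}$, the idea is to use the strong Markov property at $\tau_0^-$ together with the memorylessness of $e_q$: conditionally on $\{\tau_0^-\leqslant e_q\}$, the process after $\tau_0^-$ is a reflected spectrally positive L\'evy process started from $0$ (more precisely, started from the overshoot, but for a spectrally positive process $X$ enters $(-\infty,0)$ by creeping downward through $0$ when $\sigma>0$, and in general the reflected process restarts from the overshoot which is then instantaneously reflected; the cleanest route is to observe that $Y$ restarted at $\tau_0^-$ has the law of $Y$ started from $0$), and the remaining time is again $e_q$ by the lack of memory. Thus the contribution of this event is $P_x(\tau_0^-\leqslant e_q)$ times the law of $Y(e_q)$ under $P_0$. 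Now $P_x(\tau_0^-\leqslant e_q)=P_x(\underline{X}(e_q)<0)=P(-\underline{X}(e_q)>x)=\te{-\Phi(q)x}$ by the last assertion of Lemma~\ref{exitident}. It therefore remains to identify the law of $Y(e_q)$ under $P_0$, and by the identity $Y(t)\eqdistr\overline{X}(t)$ (for $X(0)=0$) quoted in Section~\ref{ss:scale}/Preliminaries, this is precisely $P(\overline{X}(e_q)\in\td y)$, which Lemma~\ref{exitident} gives as $\frac{q}{\Phi(q)}W^{(q)}(\td y)-qW^{(q)}(y)\,\td y$ on $(0,\infty)$. Under Condition~\eqref{eq:condW}, $W^{(q)}$ is continuously differentiable on $(0,\infty)$, so $W^{(q)}(\td y)=W^{(q)\prime}(y)\,\td y$ there, giving the density $\frac{q}{\Phi(q)}W^{(q)\prime}(y)-qW^{(q)}(y)$; the atom of $W^{(q)}$ at $0$, of mass $W^{(q)}(0)$ (which is $0$ unless $X$ has bounded variation, e.g.\ zero for the Brownian case), produces the atom $\te{-\Phi(q)x}W^{(q)}(0)\delta_0(\td y)$. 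Collecting the two contributions yields exactly the claimed formula for $P_x(Y(e_q)\in\td y)$ with $h(x,y)$ as in \eqref{maindensity}.

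The main obstacle I anticipate is making the restart-at-$\tau_0^-$ argument fully rigorous, i.e.\ justifying that the reflected process $Y$ after first passage of $X$ below $0$ has the same law as $Y$ started from $0$, and handling the overshoot/creeping dichotomy for a general spectrally positive $X$ (where downward passage may occur either by creeping, when $\sigma>0$, or by a jump, but note a spectrally positive process has no negative jumps, so it can only pass below $0$ continuously — this actually simplifies matters: $X(\tau_0^-)=0$ and the restart is genuinely from $0$, modulo the subtlety of whether $0$ is regular for $(-\infty,0)$, which is exactly what Condition~\eqref{eq:condW} together with the excluded monotone case guarantees). An alternative, perhaps cleaner, derivation that sidesteps the pathwise restart argument is to note directly that $P_x(Y(e_q)\in\td y)$ can be read off from the Wiener--Hopf-type decomposition used in Lemma~\ref{potentialkilled}: write $Y(e_q)=X(e_q)-\underline{X}(e_q)$ when $\underline X(e_q)+x\geqslant 0$ and $Y(e_q)=(X(e_q)-\underline X(e_q))-(\,-\underline X(e_q)-x\,)$ i.e.\ $Y(e_q)=X(e_q)-\underline X(e_q)$ shifted appropriately when $-\underline X(e_q)>x$; using independence of $X(e_q)-\underline X(e_q)\eqdistr\overline X(e_q)$ from $-\underline X(e_q)\sim\mathrm{Exp}(\Phi(q))$ and integrating the exponential density over $(x,\infty)$ (which contributes the factor $\te{-\Phi(q)x}$ and leaves the full law of $\overline X(e_q)$) gives the second term, while integrating over $[0,x]$ reproduces $q\,r^{(q)}(x,y)$ exactly as in the computation \eqref{Rqgeneral}. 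Either way, the only genuinely delicate point is the continuity/atom structure of $W^{(q)}$ at $0$, which is controlled by \eqref{eq:condW}.
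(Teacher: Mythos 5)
Your proposal is correct and follows essentially the same route as the paper: decompose on whether the reflected process hits $0$ before $e_q$, use memorylessness of $e_q$ together with the fact that a spectrally positive process crosses $0$ continuously (so the restart is genuinely from $0$), and then apply Lemma~\ref{exitident} for $P_x(\tau_0^-<e_q)=\te{-\Phi(q)x}$ and for the law of $\overline{X}(e_q)$, including the atom of mass $W^{(q)}(0)$ at zero. Your alternative Wiener--Hopf sketch is a fine consistency check but is not needed; the main argument matches the paper's proof.
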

\begin{proof}

Define $\kappa_0=\inf\{t\geqslant 0: Y(t)=0\}$, and observe that
\begin{align*}
P_x(Y(e_q)\in \td y)&=P_x(Y(e_q)\in \td y, \kappa_0>e_q)+P_x(Y(e_q)\in \td y, \kappa_0<e_q)\\
&= P_x(X(e_q)\in \td y, \tau_0^->e_q)+ P(Y(e_q)\in \td y)P_x(\tau_0^-<e_q)\\
&=qr^{(q)}(x,y)\td y+ P(\overline{X}(e_q)\in \td y)P(-\underline{X}(e_q)>x)\\
&=qr^{(q)}(x,y)\td y+ \te{-\Phi(q)x}\left[\frac{q}{\Phi \left(q\right)
} W^{(q)\prime}(y)-qW^{(q)}(y)\right]\mathbf{1}_{\{y>0\}}\td y\\&\quad+\te{-\Phi(q)x}W^{(q)}(0)\delta_0(\td y),
\end{align*}
where in the second equality we use the lack of memory of the exponential distribution and the fact that $X$ is spectrally positive; hence it crosses $0$ in continuous way. The last equality follows from Lemma \ref{exitident}.
\end{proof}

\begin{lem}
Assume that there exists an a.s.\ finite r.v.\ $F_0$ such that
\begin{equation}\label{statassump2}
F(y)\leqslant F_0\qquad\mbox{for any $y$.}
\end{equation}
Then the stationary distribution $\pi(y)$ of $Z_n$ exists and satisfies the following balance equation:
\begin{equation}\label{statembedd}
\int_{0}^\infty g(y)\td\pi(y)+g(0)\pi(0)=\int_{[0,\infty)}\int_{[0,\infty)}g(y) k(x,\td y)\td\pi(x)
\end{equation}
for any bounded function $g$.
\end{lem}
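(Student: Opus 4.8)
\emph{Proof plan.} I would prove the two assertions separately, the existence of $\pi$ carrying essentially all the weight. For existence the plan is to show that the embedded chain $\{Z_n\}$ is \emph{positive Harris recurrent}; standard Markov-chain theory then yields a unique invariant probability measure $\pi$ (this recurrence is also what the convergence claims of the introduction will rest on), while the balance equation \eqref{statembedd} is just the invariance identity $\pi=\pi k$ tested against bounded functions. Positive Harris recurrence I would get from two ingredients: a small set carrying a minorisation of $k$, and a guarantee that $\{Z_n\}$ returns to it with bounded-mean return times. Lemma~\ref{transitionreflected} supplies the first and hypothesis \eqref{statassump2} the second.

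\emph{Step 1 (small set).} Fix $M>0$ and set $C=[0,M]$. By Lemmas~\ref{transitionreflected} and~\ref{exitident}, for every $x$,
\begin{equation*}
P_x(Y(e_q)\in\td y)=qr^{(q)}(x,y)\,\td y+\te{-\Phi(q)x}\,P(\overline{X}(e_q)\in\td y)\ \geqslant\ \te{-\Phi(q)x}\,P(\overline{X}(e_q)\in\td y),
\end{equation*}
because $r^{(q)}\geqslant0$ and $P(\overline{X}(e_q)\in\cdot)$ (the law of $Y(e_q)$ under $P_0$) is a fixed nonzero measure. Integrating against $P^F$ in \eqref{kernel} gives, uniformly over $x\in C$,
\begin{equation*}
k(x,\td y)\ \geqslant\ \te{-\Phi(q)M}\,\mu(\td y),\qquad \mu(\td y):=\int P\bigl(f(\overline{X}(e_q))\in\td y\bigr)\,\td P^F(f),
\end{equation*}
with $\mu$ a fixed nonzero measure on $[0,\infty)$; hence $C$ is a small set with minorising constant $\te{-\Phi(q)M}>0$ and minorising measure $\mu$.

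\emph{Step 2 (recurrence and positivity).} This is where \eqref{statassump2} is used. Since the $F_i$ are i.i.d.\ I may take an i.i.d.\ sequence of a.s.\ finite random variables $\{F_0^{(n)}\}$ with $Z_n=F_n(Y(e_q^{(n)}))\leqslant F_0^{(n)}$ a.s.; then $P_x(Z_n\in C)\geqslant P(F_0\leqslant M)=:p_M$ for every $x$ and every $n\geqslant1$, with $p_M\uparrow1$ as $M\to\infty$, so I enlarge $M$ (Step~1 remaining valid) until $p_M>0$. As $F_0^{(n+1)}$ is independent of $Z_0,\dots,Z_n$, in fact $P(Z_{n+1}\in C\mid Z_0,\dots,Z_n)\geqslant p_M$; thus the successive return times of $\{Z_n\}$ to $C$ are stochastically dominated by geometric variables, so have finite mean, and the independent events $\{F_0^{(n)}\leqslant M\}\subseteq\{Z_n\in C\}$ occur infinitely often by the second Borel--Cantelli lemma, so $P_x(Z_n\in C\ \text{i.o.})=1$ for every $x$. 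Together with the minorisation of Step~1 this makes $\{Z_n\}$ positive Harris recurrent, hence it possesses a unique invariant probability measure $\pi$. (Existence alone could instead be obtained by a Krylov--Bogolyubov argument, using $P_x(Z_n>M)\leqslant P(F_0>M)$; the Harris route is preferred since it also gives uniqueness and the later convergence. The atom $\pi(\{0\})$ is inherited from the $\delta_0$-term of $k$, which is nontrivial, e.g., whenever $W^{(q)}(0)>0$ or $F(0)$ charges $0$, as happens for $F(y)=(B-y)^+$.)

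\emph{Step 3 (balance equation).} Invariance of $\pi$ reads $\pi(A)=\int_{[0,\infty)}k(x,A)\,\td\pi(x)$ for every Borel $A\subseteq[0,\infty)$. Expressing a bounded measurable $g$ as a monotone limit of simple functions and applying Tonelli's theorem (after harmlessly shifting $g$ to be nonnegative) gives
\begin{equation*}
\int_{[0,\infty)}g(y)\,\td\pi(y)=\int_{[0,\infty)}\int_{[0,\infty)}g(y)\,k(x,\td y)\,\td\pi(x);
\end{equation*}
separating off the mass $\pi(0):=\pi(\{0\})$ at the atom from the mass on $(0,\infty)$ on the left-hand side turns this into \eqref{statembedd}. \emph{The main obstacle} is Step~2: pinning down the correct small set and, above all, upgrading recurrence to \emph{positive} recurrence via \eqref{statassump2}; once the chain is known to be positive Harris recurrent, both the existence of $\pi$ and the balance equation follow by soft arguments.
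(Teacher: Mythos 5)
Your proposal is correct and follows essentially the same route as the paper: a uniform geometric bound on the return time to $[0,N]$ obtained from the i.i.d.\ dominating variables $F_{0,n}\geqslant Z_n$ supplied by \eqref{statassump2}, plus a minorisation of the kernel on that set coming from the event that the reflected process hits $0$ before $e_q$ (your $\te{-\Phi(q)x}P(\overline{X}(e_q)\in\cdot)$ term is exactly the paper's $P(F(Y(e_q))\in\cdot)P_x(\kappa_0<e_q)$, read off from the decomposition underlying Lemma~\ref{transitionreflected}), yielding Harris ergodicity. The only cosmetic difference is that you extract the minorising constant explicitly as $\te{-\Phi(q)N}$ where the paper merely argues $p=P_N(\kappa_0<e_q)>0$ from non-monotonicity of the paths.
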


\begin{proof}
We show that  $\{Z_n, n\in \mathbf{N}\}$ is Harris ergodic, see for example
Asmussen~\cite[Theorems VII.3.3 and VII.3.5, p.\
200-201]{asmussen-APQ}. Fix $N>0$ such that $P(F_0\leqslant N)>0$ and define $\tau_N = \inf\{ n\geqslant 1: Z_n \leqslant N\}$. It is easy to construct i.i.d.\ random variables $\{F_{0,n}, n\in \mathbf{N}\}$ such that $F_{0,1} \eqdistr F_0$ and $Z_n \leqslant F_{0,n}, n\in \mathbf{N}$. Observe that
\[
P(\tau_N>k \mid Z_0 = x) = P(Z_1>N,\ldots,Z_k>N \mid Z_0=x) \leqslant P(F_{0,1}>N,\ldots,F_{0,k}>N) = P(F_0>N)^k,
\]
which implies that
\begin{equation}
\label{firstconditionfoss}
\sup_{x\geqslant 0} E[\tau_N \mid Z_0=x] <\infty.
\end{equation}
This implies Harris ergodicity, once we find a constant $p>0$ and a probability measure $Q(\cdot)$ such that
\begin{equation}
\label{secondonditionfoss}
P(Z_1 \in B \mid Z_0 = x) \geqslant p Q(B), \hspace{1cm} x\in [0,N].
\end{equation}
Let $x\in [0,N]$. We construct $p$ and $Q(\cdot)$ as follows. Recall that $\kappa_0=\inf\{t\geqslant 0: Y(t)=0\}$, observe that
\begin{align*}
P(Z_1 \in B \mid Z_0=x) &= P_x(F(Y(e_q)) \in B ) \\
&\geqslant P_x(F(Y(e_q)) \in B , \kappa_0 < e_q )\\
&=P(F(Y(e_q)) \in B) P_x(\kappa_0 < e_q )\\
&\geqslant P(F(Y(e_q)) \in B) P_N(\kappa_0 < e_q )\\
&:= Q(B)p.
\end{align*}
Since the paths of $Y(\cdot)$ are non-monotone, we have that $p>0$, implying \eqref{secondonditionfoss}.

\end{proof}

\begin{remark}
The assumption (\ref{statassump2}) is satisfied when $F(y)=(B-y)^+$ with $F_0=B$. Moreover, if the functional is given by $F(y)=(B-y)^+$ and the r.v.\ $B$ has a density then we have that
\begin{equation}\label{pizero}
\pi(0)= \int_{[0,\infty)} \int_0^\infty \int_t^\infty
\left\{qr^{(q)}(x,y)\td y+ \te{-\Phi(q)x}\left[\frac{q}{\Phi
\left(q\right) } W^{(q)\prime}(y)-qW^{(q)}(y)\right]\td y\right\}
\td F_B(t)\td \pi(x),
\end{equation}
and for any bounded function $g$,
\begin{multline*}
\int_{0}^\infty g(y)\td \pi(y)= \\
\int_{[0,\infty)} \int_0^\infty \int_0^t \td y g(t-y)
\left\{qr^{(q)}(x,y)+ \te{-\Phi(q)x}\left[\frac{q}{\Phi
\left(q\right) } W^{(q)\prime}(y)-qW^{(q)}(y)\right]\right\}
\td F_B(t)\td \pi(x)
\\+W^{(q)}(0)\int_{[0,\infty)}\te{-\Phi(q)x} \td \pi(x)\int_0^\infty g(t)
\td F_B(t),
\end{multline*}
where $F_B$ is the distribution function of the generic r.v.\ $B$.
\end{remark}
In Section~\ref{s:examples} we analyse more specific examples.

\section{Steady-state workload distribution}\label{s:arbitrary}

\begin{theorem}\label{main} Suppose that the stationary distribution $\pi$ exists. Then the stationary distribution $V(\infty)$ also exists. Moreover, for a bounded function $g$,
\begin{multline*}
Eg(V(\infty))=\int_{[0,\infty)}\pi(\td x)\int_0^\infty g(y)\td y\left\{qr^{(q)}(x,y)+ \te{-\Phi(q)x}\left[\frac{q}{\Phi \left(q\right) }
W^{(q)\prime}(y)-qW^{(q)}(y)\right]\right\}\\
+g(0)W^{(q)}(0)\int_0^\infty \te{-\Phi(q) x}\pi(\td x),
\end{multline*}
where the distribution $\pi$ satisfies \eqref{statembedd} with $k$ defined via \eqref{kernel} and Lemma \ref{transitionreflected} and where $r^{(q)}(x,y)$ is given in Lemma~\ref{potentialkilled}.
\end{theorem}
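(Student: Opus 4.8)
The plan is to obtain the stationary workload $V(\infty)$ at an arbitrary time from the stationary distribution $\pi$ of the embedded chain $Z_n$ by a standard renewal-reward (semi-regenerative) argument, exploiting the fact that between two consecutive adjustment epochs the process evolves as the reflected L\'evy process $Y$ started from $Z_n$, and that the inter-adjustment times $e_q^{(i)}$ are i.i.d.\ exponential with parameter $q$, independent of everything. First I would set up the cycle structure: the epochs $T_n=e_q^{(1)}+\cdots+e_q^{(n)}$ are renewal points for the Markov-modulated dynamics, and on the cycle $[T_{n},T_{n+1})$ we have $V(T_n+s)=Y^{(n)}(s)$ where $Y^{(n)}$ is a reflected spectrally positive L\'evy process with $Y^{(n)}(0)=Z_n$, run for a time $e_q^{(n+1)}\sim\mathrm{Exp}(q)$ independent of $Z_n$. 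Since $\{Z_n\}$ is Harris ergodic with stationary law $\pi$ (previous lemma) and the cycle lengths have common mean $1/q<\infty$, the semi-regenerative / Markov-renewal ergodic theorem (e.g.\ Asmussen~\cite[Ch.\ VII]{asmussen-APQ}) gives existence of $V(\infty)$ and the time-stationary formula
\[
Eg(V(\infty))=\frac{E_\pi\!\left[\int_0^{e_q} g(Y(s))\,\td s\right]}{E_\pi[e_q]}
= q\,E_\pi\!\left[\int_0^{e_q} g(Y(s))\,\td s\right],
\]
where $E_\pi$ means $Z_0$ has law $\pi$ and $Y$ starts from $Z_0$.

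Next I would evaluate the numerator. Conditioning on $e_q$ and using Fubini together with the lack of memory of the exponential clock,
\[
q\,E_x\!\left[\int_0^{e_q} g(Y(s))\,\td s\right]
= q\int_0^\infty q\te{-qt}\!\int_0^t E_x[g(Y(s))]\,\td s\,\td t
= q\int_0^\infty \te{-qs}E_x[g(Y(s))]\,\td s = E_x[g(Y(e_q))],
\]
the last step again by the exponential density. Hence $Eg(V(\infty))=\int_{[0,\infty)}\pi(\td x)\,E_x[g(Y(e_q))]$, and substituting the explicit law of $Y(e_q)$ from Lemma~\ref{transitionreflected} — namely the absolutely continuous part with density $h(x,y)=qr^{(q)}(x,y)+\te{-\Phi(q)x}\bigl[\tfrac{q}{\Phi(q)}W^{(q)\prime}(y)-qW^{(q)}(y)\bigr]$ plus the atom $\te{-\Phi(q)x}W^{(q)}(0)\delta_0(\td y)$ — yields exactly the displayed formula, with $r^{(q)}(x,y)$ as in Lemma~\ref{potentialkilled}. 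The reference to \eqref{statembedd} and \eqref{kernel} for $\pi$ is just bookkeeping: $\pi$ is the invariant law produced by the earlier lemma.

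The main obstacle is making the semi-regenerative argument fully rigorous: one must verify that the workload process $\{V(t)\}$ together with the embedded chain at adjustment epochs forms a genuine Markov renewal (semi-regenerative) process whose regeneration times are the $T_n$ with i.i.d.\ $\mathrm{Exp}(q)$ spacings, check the aperiodicity/spread-out condition needed for convergence to the time-stationary law rather than merely a Ces\`aro limit (here automatic since the cycle length is exponential, hence non-lattice), and confirm the integrability $E_\pi\bigl[\int_0^{e_q}|g(Y(s))|\,\td s\bigr]<\infty$, which holds because $g$ is bounded and $E_\pi[e_q]=1/q$. A secondary technical point is justifying the interchange $Eg(V(\infty))=\int\pi(\td x)E_x[g(Y(e_q))]$, i.e.\ that one may first take the cycle-average formula and then integrate against $\pi$; this is legitimate by Tonelli for $g\geqslant0$ and extends to bounded $g$ by linearity, using that $x\mapsto E_x[g(Y(e_q))]$ is measurable and bounded by $\sup|g|$.
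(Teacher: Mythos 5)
Your proposal is correct and follows essentially the same route as the paper: the Palm inversion (semi-regenerative) formula $Eg(V(\infty))=q\int\pi(\td x)E_x\int_0^{e_q}g(Y(s))\,\td s$ from Asmussen, the exponential-density computation reducing the cycle integral to $E_x[g(Y(e_q))]$, and substitution of the law of $Y(e_q)$ from Lemma~\ref{transitionreflected}. The additional technical points you flag (non-lattice cycle lengths, Tonelli, boundedness of $g$) are sensible but the paper simply cites Asmussen's Theorem VII.6.4 for them.
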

\begin{proof}
The first part of the theorem follows from Asmussen~\cite[Theorem VII.6.4, p.\ 216]{asmussen-APQ}. Using the Palm inversion formula (see Asmussen~\cite[Theorem VII.6.4, p.\ 216]{asmussen-APQ}) one can identify the steady-state workload distribution $V(\infty)$ by the following identity:
\begin{equation}
Eg(V(\infty))=q\int_{[0,\infty)} \pi(\td x)E_x\int_0^{e_q}g(V(s))\td s.
\end{equation}
The RHS can be further developed as follows:
\begin{align*}
q\int_{[0,\infty)}& \pi(\td x)E_x\int_0^{e_q}g(V(s))\td s=q\int_{[0,\infty)} \pi(\td x)E\int_0^{e_q}E_x[g(Y(s))]\td s \\
&=q\int_{[0,\infty)} \pi(\td x)\int_0^\infty \te{-qt}E_x[g(Y(t))]\td t \\
&=\int_{[0,\infty)} \pi(\td x)E_x[g(Y(e_q))] \\
&=\int_{[0,\infty)} \pi(\td x)\int_0^\infty g(y)\td y
\left\{qr^{(q)}(x,y)+ \te{-\Phi(q)x}\left[\frac{q}{\Phi
\left(q\right) } W^{(q)\prime}(y)-qW^{(q)}(y)\right]\right\}\\
&+g(0)W^{(q)}(0)\int_0^\infty \te{-\Phi(q) x} \pi(\td x).
\end{align*}
\end{proof}

\section{Computational examples }\label{s:examples}

We now turn to analysing a few specific examples. We find that there are several solution strategies. One can either solve the equations given in Sections \ref{s:embedded} and \ref{s:arbitrary} directly, or one can also take a less direct route, using Laplace transforms. We shall consider examples of both strategies.

To this end, we start with the following simple, but very useful observation. Using PASTA, $V$ has the same distribution as $U$ which is the equilibrium distribution of the Markov chain $\{U_n, n\in \mathbf{N}\}$ of the workload process embedded at times $(e^{(1)}_q+\cdots+e^{(n)}_q)-$ (i.e.\ right before the ``correction''). To obtain our main result, we first state and prove the following useful lemma.

\begin{lem}\label{taillemma1}
The following equality holds in distribution:
\begin{equation}
U \eqdistr \max \{F(U) + X(e_q), \overline X (e_q)\}.
\end{equation}
\end{lem}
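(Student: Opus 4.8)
The plan is to track the workload over one full cycle, i.e.\ between two consecutive correction epochs, and to exploit the duality between the reflected process $Y$ and the supremum process that is already recorded in the Preliminaries. Fix the embedded pre-correction chain $\{U_n\}$; by definition $U_{n+1}$ is the value of the reflected L\'evy input observed at the end of the $(n+1)$-st exponential clock, started from the workload left \emph{immediately after} the $n$-th correction, which is $F(U_n)$ (with $F$ an independent copy of the generic functional). So the one-step recursion is
\begin{equation}\label{eq:onestepU}
U_{n+1}\eqdistr \widetilde Y\bigl(e_q\bigr),
\end{equation}
where $\widetilde Y$ is the process $X$ started at $F(U_n)$ and reflected at $0$, and $e_q$ is an independent exponential clock; in stationarity $U_{n+1}\eqdistr U$ and, by the PASTA/Palm remark just made, $U\eqdistr V$.

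\smallskip
\textbf{Key step: identify the reflected process started from a level.} The main work is to rewrite $\widetilde Y(e_q)$ started from level $a:=F(U_n)\geqslant 0$ in a form that separates the ``new input'' from the ``old level''. Writing $X$ for a version started at $0$, one has the pathwise identity, for the process reflected at $0$ and started at $a$,
\[
\widetilde Y(t)=\bigl(a+X(t)\bigr)\vee\Bigl(\sup_{0\leqslant s\leqslant t}\bigl(X(t)-X(s)\bigr)\Bigr)
= \bigl(a+X(t)\bigr)\vee\bigl(\overline X(t)-\underline{\,}\,\ldots\bigr),
\]
but more cleanly: the reflection map gives $\widetilde Y(t)=a+X(t)-\bigl(a+\underline X(t)\bigr)\wedge 0 = \max\{a+X(t),\,X(t)-\underline X(t)\}$. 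Since $X(t)-\underline X(t)\eqdistr \overline X(t)$ by the Preliminaries (Kyprianou, Lemma 3.5), and since $X(t)-\underline X(t)$ is a functional of the increments of $X$ over $[0,t]$ only and hence is independent of $a=F(U_n)$ (which depends on $U_n$, built from clocks strictly before the current one), evaluating at the independent clock $e_q$ and passing to the stationary distribution yields
\[
U\eqdistr \max\{F(U)+X(e_q),\ \overline X(e_q)\},
\]
with $F$, $X(e_q)$ and $\overline X(e_q)$ coupled through the \emph{same} realisation of $X$ on $[0,e_q]$, and $U$ on the right an independent copy distributed as the stationary law. That is precisely the claimed identity.

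\smallskip
\textbf{Where the care is needed.} The routine part is the reflection-map algebra $\widetilde Y(t)=\max\{a+X(t),\,X(t)-\underline X(t)\}$; the one real subtlety is the joint-distribution bookkeeping. I must be careful that in the displayed identity the three objects $F(U)$, $X(e_q)$ and $\overline X(e_q)$ are \emph{not} independent of one another: $X(e_q)$ and $\overline X(e_q)$ come from one common path (so $\overline X(e_q)\geqslant X(e_q)$ always, which is what makes the $\max$ meaningful), while $U$ inside $F(U)$ is an independent copy of the stationary workload because it is determined by the evolution strictly before the current clock. I would phrase this via the one-step stationarity equation \eqref{eq:onestepU} and the elementary fact (time-reversal / Markov property at an independent exponential time, as used already in the proof of Lemma~\ref{potentialkilled}) that the post-infimum process $X(t)-\underline X(t)$ over $[0,e_q]$ has the law of $\overline X(e_q)$ and is measurable with respect to the increment $\sigma$-field of $X$ on the current cycle, hence independent of the starting level $F(U)$. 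That independence statement, rather than any computation, is the step to state precisely.
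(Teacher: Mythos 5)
Your argument is correct and follows essentially the same route as the paper: the one-step identity $U_1\eqdistr Y(e_q)$ with $Y(0)=F(U_0)$, followed by the reflection-map identity $Y(t)=\max\{a+X(t),\,X(t)-\underline X(t)\}$ and duality (time reversal) to replace $X(t)-\underline X(t)$ by $\overline X(t)$, which is exactly what the paper means by ``mimicking the proof of Kyprianou, Lemma 3.5.'' The only point to state a touch more carefully is that the substitution must hold \emph{jointly}, i.e.\ $(X(t),X(t)-\underline X(t))\eqdistr(X(t),\overline X(t))$, not merely marginally; the time-reversal argument you invoke does deliver this, and you correctly flag the joint-distribution bookkeeping as the delicate step.
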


\begin{proof}
If $U_0$ is $x$, we see that $U_1 \eqdistr Y(e_q)$, with $Y(0)=F(x)$. If $Y(0)=F(x)$, we further have
\[
Y(t) \eqdistr \max \{ F(x)+ X(t), \overline X(t)\},
\]
which follows, for example, by mimicking the proof of Kyprianou~\cite[Lemma 3.5, p.\ 74]{Kbook}, starting from the expression for $Y(t)$ given in Kyprianou~\cite[p.\ 19]{Kbook}. Combining these two observations leads to the statement of the lemma.
\end{proof}

\begin{Ex}\label{eg:3}
The most trivial example is when $F(y)=B\geqslant 0$. In this simple case, there is no need to use the formula derived for the generator $k(x,y)$. Using the above lemma, we see that
\begin{align*}
V(\infty) &\eqdistr \max \{B + X(e_q), \overline X (e_q)\} \\
&= \overline X (e_q)+ \max \{ B+X(e_q)- \overline X (e_q), 0\}\\
&\eqdistr \overline X (e_q) +  \max \{ B-e_{\Phi(q)},0\}.
\end{align*}
In the last equation, which follows from the Wiener-Hopf factorisation, $e_{\Phi(q)}$ is a random variable which is exponentially distributed with rate $\Phi(q)$, which is independent of everything else.
Observe that

\begin{align*}
E[\te{-s\max \{ B-e_{\Phi(q)},0\}}] &= P(e_{\Phi(q)}>B) + E[\te{-s( B-e_{\Phi(q)})};e_{\Phi(q)}<B]\\
&= P(e_{\Phi(q)}>B) + E[\te{-s( B-e_{\Phi(q)})}]-E[\te{-s( B-e_{\Phi(q)})};e_{\Phi(q)}\geqslant B] \\
&= E[\te{-\Phi(q)B}] + E[\te{-sB}]\frac{\Phi(q)}{\Phi(q)-s} -  \frac{\Phi(q)}{\Phi(q)-s}E[\te{-\Phi(q)B}]\\
&= \frac{\Phi(q)E[\te{-sB}] - s E[\te{-\Phi(q)B}]}{\Phi(q)-s}
\end{align*}
Combining this with Lemma~\ref{exitident}, we obtain
\begin{equation}
E[\te{-sV(\infty)}] = \frac{q(s -\Phi \left( q\right) )}{\Phi \left( q\right) (\psi \left( s \right) -q)}
 \frac{\Phi(q)E[\te{-sB}] - s E[\te{-\Phi(q)B}]}{\Phi(q)-s}.
 \end{equation}
This is an extension of various results in the literature focusing on clearing models (i.e.\ systems with workload removal) where $B=0$. See for example \cite{boxmaperry,Kellatotalclearing} and references therein.

\end{Ex}

\begin{Ex}
We now consider an example where it seems more natural to solve the equations developed in Sections \ref{s:embedded} and \ref{s:arbitrary} directly. Consider the case when $F(y)=(B-y)^+$ with $B$ being exponentially distributed with intensity $\beta$. Moreover, $X(t)= \sum_{i=1}^{N(t)}\sigma_i-pt$ is a compound Poisson process with exponentially distributed service times $\sigma_i$ with intensity $\mu$ (see also the setup of Example~\ref{compound}). Note that
$$
\psi(\theta)=p\theta-\lambda\int_0^\infty (1-\te{-\theta z})\mu \te{-\mu z}\td z=p\theta-\lambda\frac{\theta}{\mu(\mu+\theta)}
$$
and recall that $\Phi(q)=q^+(q)$. Thus,
\begin{align*}
E\te{-\alpha V(\infty)}&=q\int_{[0,\infty)}\pi(\td x)\int_0^\infty\td y \te{-\alpha y}r^{(q)}(x,y)+ \frac{q\tilde{\pi}(\Phi(q))(\alpha-\Phi(q))}{\Phi(q)(\psi(\alpha)-q)}+\frac{1}{p}(A_+-A_-)\tilde{\pi}(\Phi(q))\\
&=H(\alpha, \pi)+\frac{q\tilde{\pi}(\Phi(q))(\alpha-\Phi(q))}{\Phi(q)(\psi(\alpha)-q)}+\frac{1}{p}(A_+-A_-)\tilde{\pi}(\Phi(q)),
\end{align*}
where
\begin{multline*}
H(\theta,u)= \frac{q}{p}A^-\left\{\tilde{u}(q^+(q))\frac{q^+(q)-q^-(q)}{(\theta-q^+(q))(\theta-q^-(q))}- \tilde{u}(\theta)\frac{2q^+(q)}{\theta^2-q^+(q)^2}\right.\\
\left.+\tilde{u}(\theta+q^+(q)-q^-(q))\frac{2q^-(q)}{\theta^2-q^-(q)^2}\right\}
\end{multline*}
and
$\tilde{u}(\theta)=\int_{[0,\infty)} \te{-\theta
x}u(\td x)$. To complete the computations we have to find the LST $\tilde{\pi}$ of the stationary distribution $\pi$. By the memoryless property of the exponential distribution of $B$ we have that $\pi(\td x)=\beta \te{-\beta x}\td x$, $x\geqslant 0$. Hence,
$$
\tilde{\pi}(\theta)=\pi(0)+\frac{\beta}{\beta+\theta}.
$$
We will find now $\pi(0)$ using \eqref{pizero}.
\begin{multline*}
\pi(0)=\pi(0)\beta \int_0^\infty \te{-\beta t}\td t \int_t^\infty
 \left[\frac{q}{\Phi(q)}W^{(q)\prime}(y)-qW^{(q)}(y)\right]\td y+\\
+\beta^2\int_0^\infty \te{-\beta x} \td x \int_0^\infty \te{-\beta t}\td t
\int_t^\infty
\left\{qr^{(q)}(x,y)+ \te{-\Phi(q)x}\left[\frac{q}{\Phi(q)}W^{(q)\prime}(y)-qW^{(q)}(y)\right]\right\}\td y.
\end{multline*}
Let now $\epsilon_\beta(\td x)=\beta \te{-\beta x}\td x$ and
$$
G(q)=\frac{\beta q}{p(\beta-q^-(q))}\frac{q^-(q)-q^+(q)}{q^-(q)q^+(q)}.
$$
Then,
$$
\pi(0)=\frac{G(q)\frac{\beta}{\beta+q^+(q)}+H(0,\epsilon_\beta)- H(\beta,\epsilon_\beta)}{1-G(q)}.
$$
More general cases can be handled at the cost of more cumbersome computations. For example, we can add a compound Poisson process with phase-type jumps to the L\'evy process, and we can allow $B$ to have a phase-type distribution. See \cite{vlasiouboxma} for similar computations in a discrete-time setting.
\end{Ex}

If one cannot expect to obtain distributions in closed form, one can still aim to obtain Laplace transforms. Using similar arguments as in Example~\ref{eg:3}, we obtain the key equation (abbreviating $V=V(\infty)$)
\begin{equation}
\label{keyequation}
E[\te{-sV}] = \frac{q(s -\Phi \left( q\right) )}{\Phi \left( q\right) (\psi \left( s \right) -q)}
 \frac{\Phi(q)E[\te{-sF(V)}] - s E[\te{-\Phi(q)F(V)}]}{\Phi(q)-s}.
 \end{equation}

This equation is, of course, too complicated to solve for an arbitrary $F$, but nevertheless seems useful.

\begin{Ex}
Suppose that $F(x)=\delta x$, $\delta \in (0,1)$. This case is a generalisation of a model for the throughput behaviour of a data connection under the Transmission Control Protocol (TCP) where typically the L\'{e}vy process is a simple deterministic drift; see for example \cite{altman,guillemin,krishanu,krishanu2} and references therein.

Equation~\eqref{keyequation} reduces to
\begin{equation}
\label{keyequation2}
E[\te{-sV}] = \frac{q}{q-\psi(s)}E[\te{-s\delta V}] + \frac{qs}{\Phi(q)(\psi \left( s \right) -q)}E[\te{-\Phi(q)\delta V}].
 \end{equation}
This is an equation of the form $v(s)=g(s)v(\delta s)+h(s)$, which, since $v(0)=1$, has as (formal) solution
\[
v(s) = \prod_{j=0}^\infty g(\delta^j s) + \sum_{k=0}^\infty h(\delta^k s) \prod_{j=0}^{k-1} g(\delta^js).
\]
Specialising to our situation we obtain
\begin{eqnarray*}
v(s) &=& \prod_{j=0}^\infty \frac{q}{q-\psi(\delta^j s)} + v(\delta \Phi(q))
\sum_{k=0}^\infty \frac{qs\delta^k }{\Phi(q)(\psi \left( s\delta^k \right) -q)} \prod_{j=0}^{k-1} \frac{q}{q-\psi(\delta^j s)}\\
&=& \prod_{j=0}^\infty \frac{q}{q-\psi(\delta^j s)} + v(\delta \Phi(q)) \frac{s}{\Phi(q)}
\sum_{k=0}^\infty \delta^k  \prod_{j=0}^{k} \frac{q}{q-\psi(\delta^j s)}.
\end{eqnarray*}
Since $\psi(0)=0$, it easily follows that the infinite products converge, and the final expression for $v(s)$ yields an equation from which the only remaining unknown constant $v(\delta \Phi(q))$ can be solved explicitly.
\end{Ex}

\section{Tail behaviour}\label{tail}

In this section we consider the tail behaviour of $V=V(\infty)$, under a variety of assumptions on the tail behaviour of the
the L\'{e}vy measure $\nu$.
The treatment in this section can be seen as the continuous-time analogue of the results in
Vlasiou and Palmowski \cite{vlasiou08}. There, a similar result is shown where $e_q$ is geometrically distributed, $X(\cdot)$ is replaced by a (general) random walk, and $F(y)=(B-y)^+$, with $B$ identical to the increments of the random walk. In \cite{vlasiou08} we modify ideas from Goldie~\cite{Goldie91} in the light-tailed case and develop stochastic lower and upper bounds in the heavy-tailed case. Here we  take a different approach which is based on the well-developed fluctuation theory of spectrally one-sided L\'{e}vy processes. Before we present our main results, we first state some lemmas.

\begin{lem}\label{tailbounds1}
The following (in)equalities hold:
\begin{align}
P(U>x)&= P(X(e_q)+F(U)>x) \nonumber\\&+\int_0^\infty (P(\overline X(e_q)>x)-P(\overline X(e_q)>x+y)) P(-\underline{X}(e_q)-F(U)\in \td y),\label{bound1}\\
P(U>x)&\leqslant P(X(e_q)+F(U)>x)+P(\overline X(e_q)>x)P(-\underline{X}(e_q)>F(U)),\label{bound3}\\
P(U>x)&\geqslant P(\overline X(e_q)>x)P(-\underline{X}(e_q)>F(U)).\label{bound4}
\end{align}
\end{lem}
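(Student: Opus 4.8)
The plan is to start from the distributional fixed‑point relation of Lemma~\ref{taillemma1},
\[
U \eqdistr \max\{F(U) + X(e_q),\ \overline X(e_q)\},
\]
in which the copy of $U$ on the right‑hand side (hence $F(U)$) is independent of the L\'evy path $\{X(t):t\leqslant e_q\}$. First I would split according to which term attains the maximum, using that $\{\max\{a,b\}>x\}$ is the \emph{disjoint} union of $\{a>x\}$ and $\{a\leqslant x,\ b>x\}$, with $a=F(U)+X(e_q)$ and $b=\overline X(e_q)$. This immediately gives
\[
P(U>x)=P\bigl(X(e_q)+F(U)>x\bigr)+P\bigl(X(e_q)+F(U)\leqslant x,\ \overline X(e_q)>x\bigr),
\]
so the first summand of \eqref{bound1} is already in place and the remaining work is to rewrite the second probability.

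For that term I would invoke the Wiener--Hopf factorisation at the exponential time $e_q$, exactly as in Example~\ref{eg:3}: the running supremum $\overline X(e_q)$ is independent of $\overline X(e_q)-X(e_q)$, and $\overline X(e_q)-X(e_q)\eqdistr -\underline X(e_q)$, which is exponential with rate $\Phi(q)$ by Lemma~\ref{exitident}. Since $U$ is independent of the path, $\overline X(e_q)$, $\overline X(e_q)-X(e_q)$ and $F(U)$ are mutually independent, so one may replace $\overline X(e_q)-X(e_q)$ by an independent copy of $-\underline X(e_q)$ in the formulas below. Writing $X(e_q)=\overline X(e_q)-(\overline X(e_q)-X(e_q))$, the event $\{X(e_q)+F(U)\leqslant x,\ \overline X(e_q)>x\}$ becomes $\{\,x<\overline X(e_q)\leqslant x+(-\underline X(e_q)-F(U))\,\}$, which forces $-\underline X(e_q)-F(U)>0$. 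Conditioning on $-\underline X(e_q)-F(U)=y$: for $y\leqslant0$ the event is empty, and for $y>0$ independence gives probability $P(\overline X(e_q)>x)-P(\overline X(e_q)>x+y)$. Integrating against the law of $-\underline X(e_q)-F(U)$ over $y>0$ yields exactly \eqref{bound1}.

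The two inequalities then follow at once. For \eqref{bound3}, in the integral of \eqref{bound1} I would bound the integrand by $P(\overline X(e_q)>x)-P(\overline X(e_q)>x+y)\leqslant P(\overline X(e_q)>x)$, pull this factor out, and use $\int_0^\infty P(-\underline X(e_q)-F(U)\in\td y)=P(-\underline X(e_q)>F(U))$. For \eqref{bound4} I would not use \eqref{bound1} at all: directly from Lemma~\ref{taillemma1}, $U\eqdistr\max\{F(U)+X(e_q),\overline X(e_q)\}\geqslant\overline X(e_q)$ a.s., hence $P(U>x)\geqslant P(\overline X(e_q)>x)\geqslant P(\overline X(e_q)>x)\,P(-\underline X(e_q)>F(U))$, the last step simply because that probability is at most $1$ (the stated form being the weaker one convenient for later sandwiching against \eqref{bound3}).

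The one point requiring genuine care — and the only real obstacle — is the bookkeeping of the independence structure: that the Wiener--Hopf factorisation at $e_q$ together with independence of $U$ from the path really yields mutual independence of $\overline X(e_q)$, $\overline X(e_q)-X(e_q)$ and $F(U)$; that substituting $-\underline X(e_q)$ for $\overline X(e_q)-X(e_q)$ inside the integral is legitimate (valid because only the joint law of $(\overline X(e_q),\overline X(e_q)-X(e_q))$ enters, and these are independent); and the $\{y\leqslant0\}$ versus $\{y>0\}$ split in the conditioning. Everything else is elementary manipulation of indicator events.
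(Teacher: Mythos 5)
Your proof is correct and follows essentially the same route as the paper's (one-sentence) argument: the fixed-point identity of Lemma~\ref{taillemma1}, the decomposition $X(e_q)=\overline X(e_q)-(\overline X(e_q)-X(e_q))$, and the Wiener--Hopf independence of $\overline X(e_q)$ and $\overline X(e_q)-X(e_q)\eqdistr-\underline X(e_q)$; you merely spell out the disjoint splitting of the maximum and the conditioning on $-\underline X(e_q)-F(U)=y$ that the paper leaves implicit. Your derivations of \eqref{bound3} (bounding the integrand by $P(\overline X(e_q)>x)$) and of \eqref{bound4} (via $U\geqslant_{\mathrm{st}}\overline X(e_q)$ and then multiplying by a probability) are both valid and consistent with what the paper intends.
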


\begin{proof}
All identities follow from  Lemma \ref{taillemma1}, the decomposition $X(e_q)=\overline X(e_q)- (X(e_q)-\overline X(e_q))$, and the Wiener-Hopf factorisation factorisation which implies that $\overline X(e_q)$ and $\overline X(e_q)-X(e_q)$ are independent. We also use the simple fact that $\overline X(e_q)-X(e_q)$ has the same law as
$-\underline{X}(e_q)$ (see Kyprianou~\cite[p.\ 158]{Kbook}).
\end{proof}

In addition, we need two standard results from the literature on L\'evy processes.

\begin{lem}\label{ladderrepr} (Kyprianou~\cite[p.\ 165]{Kbook})
The random variable $\overline X(e_q)$ has the same law as $H(e_{\kappa(q,0)})$, where $\{(L^{-1}(t), H(t)), t\geqslant 0\}$ is a ladder height process
of $X$ with the Laplace exponent $\kappa (\varrho,\zeta)$ defined by
$$
E\te{\varrho L^{-1}(t)+\zeta H(t)}=\te{\kappa (\varrho,\zeta)t}.
$$
\end{lem}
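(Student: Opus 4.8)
The plan is to recognise $\overline X(e_q)$ as the all-time maximum of the process $X$ killed at the independent exponential clock $e_q$, and to identify the ascending ladder structure of this killed process with that of $X$ run up to an independent $e_{\kappa(q,0)}$. Throughout I will use the standard sign convention $E[\te{-\varrho L^{-1}(t)-\zeta H(t)};\,t<\mathbf L]=\te{-\kappa(\varrho,\zeta)t}$ for $\varrho,\zeta\geqslant 0$, where $\mathbf L=L(\infty)$ is the lifetime of the ladder process (an exponential variable with rate $\kappa(0,0)$, or $+\infty$ when $X$ does not drift to $-\infty$); with this convention $\kappa(q,0)>0$ for $q>0$, so $e_{\kappa(q,0)}$ makes sense.

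First I would recall the two classical facts needed. (i) The running supremum is recovered from the ladder process by $\overline X(t)=H(L(t))$ for every $t\geqslant 0$, where $L$ is the local time of $X$ at its supremum and $L^{-1},H$ are its right-continuous inverse and the associated ascending ladder height; equivalently, the closed range of $s\mapsto H(s)$ is the closure of $\{\overline X(t):t\geqslant 0\}$, and the excursion intervals of $X$ away from its supremum are exactly the jump intervals of $L^{-1}$. (ii) For any (possibly killed) L\'evy process, its all-time supremum equals its ascending ladder height evaluated at the lifetime of the ladder process. Applying (ii) to $X^{(q)}:=X$ killed at rate $q$ and combining with (i) gives $\overline X(e_q)=H(L(e_q))$ almost surely, so the task reduces to identifying the joint law of $\bigl(L(e_q),(H(s))_{s\leqslant L(e_q)}\bigr)$.

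The key computation is as follows. Since $e_q$ is independent of $X$, hence of $(L^{-1},H)$, and since $L$ and $L^{-1}$ are mutually inverse and $L$ is continuous, for every $s\geqslant 0$ one has $\{L(e_q)>s\}=\{L^{-1}(s)<e_q\}$ on $\{s<\mathbf L\}$ (both sides being empty for $s\geqslant\mathbf L$). Conditioning on the ladder process and using the lack of memory of $e_q$ gives, for $\zeta\geqslant 0$,
\[
P\bigl(L(e_q)>s\mid (L^{-1},H)\bigr)=\te{-qL^{-1}(s)}\,\mathbf 1_{\{s<\mathbf L\}},\qquad
E\bigl[\te{-\zeta H(s)}\mathbf 1_{\{s<L(e_q)\}}\bigr]=E\bigl[\te{-\zeta H(s)-qL^{-1}(s)}\mathbf 1_{\{s<\mathbf L\}}\bigr]=\te{-\kappa(q,\zeta)s}.
\]
Reading off $\zeta=0$ shows $L(e_q)\eqdistr e_{\kappa(q,0)}$, and the second identity says precisely that the ascending ladder height of $X^{(q)}$ is a subordinator with Laplace exponent $\zeta\mapsto\kappa(q,\zeta)-\kappa(q,0)$ killed at rate $\kappa(q,0)$ — i.e., the process $H$ run up to an independent $e_{\kappa(q,0)}$, since a killed subordinator is an unkilled one evaluated at an independent exponential time. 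Together with $\overline X(e_q)=H(L(e_q))$ this yields $\overline X(e_q)\eqdistr H(e_{\kappa(q,0)})$. As a sanity check, integrating the displays against $q\te{-qt}$ reproduces $E[\te{-\zeta\overline X(e_q)}]=\kappa(q,0)/\kappa(q,\zeta)$, the spatial Wiener--Hopf factor, consistent with the expression for $E(\te{-\alpha\overline X(e_q)})$ in Lemma~\ref{exitident}.

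The step I expect to be the main obstacle is the careful bookkeeping at the excursion intervals: making rigorous the identity $\overline X(e_q)=H(L(e_q))$ (as opposed to $H(L(e_q)-)$) and handling possible simultaneous jumps of $(L^{-1},H)$, together with degenerate features of the local time (holding times of $L^{-1}$ when $X$ has a compound-Poisson component, and the case $\mathbf L<\infty$ when $X$ drifts to $-\infty$). All of this is entirely classical and is treated in Bertoin~\cite[Ch.\ VI]{bert96} and Kyprianou~\cite[Ch.\ 6]{Kbook}; since the lemma as stated is taken verbatim from the latter, in the paper itself it suffices to cite Kyprianou~\cite[p.\ 165]{Kbook}.
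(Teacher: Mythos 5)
The paper offers no proof of this lemma at all: it is quoted from Kyprianou~\cite[p.\ 165]{Kbook} as a known fact, so the only thing to compare your argument against is that citation. What you have written is the standard ladder/excursion derivation (Bertoin~\cite[Ch.\ VI]{bert96}, Kyprianou~\cite[Ch.\ 6]{Kbook}), and it is correct in substance. Two of your choices deserve explicit endorsement. First, you were right to replace the paper's definition $E\te{\varrho L^{-1}(t)+\zeta H(t)}=\te{\kappa(\varrho,\zeta)t}$ by the usual convention $E[\te{-\varrho L^{-1}(t)-\zeta H(t)};\,t<\mathbf{L}]=\te{-\kappa(\varrho,\zeta)t}$; as printed, the exponent has the wrong sign and $\kappa(q,0)$ would not be a positive killing rate. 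Second, your conclusion correctly identifies $\overline X(e_q)$ with the terminal value of the ascending ladder height of the \emph{killed} process $X^{(q)}$, i.e.\ the subordinator with exponent $\zeta\mapsto\kappa(q,\zeta)-\kappa(q,0)$ killed at rate $\kappa(q,0)$. This is the reading that reproduces the Wiener--Hopf factor $E\te{-\zeta\overline X(e_q)}=\kappa(q,0)/\kappa(q,\zeta)$ and the one the paper itself relies on later (in Lemma~\ref{taillemma2}, where $\Pi_H$ is the L\'evy measure of $\{H(t),t<e_{\kappa(q,0)}\}$); a literal reading of the statement, with $H$ the unkilled ladder height of $X$ run to an independent $e_{\kappa(q,0)}$, would instead give $\kappa(q,0)/(\kappa(q,0)+\kappa(0,\zeta)-\kappa(0,0))$, which is wrong in general.

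The one soft spot is exactly the one you flag: pathwise one has $\overline X(e_q)=H(L(e_q)-)$ rather than $H(L(e_q))$, and for spectrally positive $X$ the two genuinely differ with positive probability, namely when $e_q$ falls inside an excursion below the running maximum that terminates with a jump of $X$ strictly above that maximum (then $L^{-1}$ and $H$ jump simultaneously at $s=L(e_q)$). The distributional identity survives because, once $\{(L^{-1}(s),H(s)):s<L(e_q)\}$ is identified in law with a bivariate subordinator killed at an independent exponential time $\mathbf{e}$ of rate $\kappa(q,0)$, that independent time a.s.\ avoids the jump times of the subordinator, so the left limit at the lifetime has the same law as the right value, with Laplace transform $\kappa(q,0)/\kappa(q,\zeta)$. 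Relatedly, your displayed computation only gives the one-dimensional marginals $E[\te{-\zeta H(s)};s<L(e_q)]=\te{-\kappa(q,\zeta)s}$; upgrading this to the killed-subordinator identification uses the regenerative (Markov) structure of the ladder process of $X^{(q)}$, which you invoke implicitly. Both points are covered by the references you cite, which is an appropriate level of detail for a lemma the paper itself only quotes.
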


From the above, one can easily derive the following version of the Pollaczek-Khinchine formula.
\begin{lem} \label{pollkhintchine}(Bertoin~\cite[p.\ 172]{bert96})
The following identity holds:
$$
P(\overline X(e_q)>x)=\kappa(q,0)U^{(q)}(x,\infty),
$$
where
$$
U^{(q)}(\td x)=\int_0^\infty\int_0^\infty \te{-qs}P(H(t)\in \td x, L^{-1}(t)\in \td s)\td t
$$
is the renewal function of the ladder height process $\{(L^{-1}(t),H(t)), t<L(e_q)\}$ and $L(\cdot)$ is a local time of $X$.
\end{lem}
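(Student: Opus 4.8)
The plan is to identify the whole distribution of $\overline{X}(e_q)$, not merely its tail, and then read the tail off by integrating: I will show that $P(\overline{X}(e_q)\in\td x)=\kappa(q,0)\,U^{(q)}(\td x)$ as measures on $[0,\infty)$, after which $P(\overline{X}(e_q)>x)=\kappa(q,0)\,U^{(q)}(x,\infty)$ is immediate. Since both sides are governed by the bivariate ladder exponent $\kappa$, the natural device is the Laplace transform in the spatial variable.

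The first step uses nothing but the definition of $U^{(q)}$. Because the integrand is nonnegative, Tonelli's theorem permits exchanging the order of integration, and the definition of $\kappa$ from Lemma~\ref{ladderrepr} gives, for $\zeta\geqslant0$,
\[
\int_{[0,\infty)}\te{-\zeta x}\,U^{(q)}(\td x)=\int_0^\infty E\!\left[\te{-qL^{-1}(t)-\zeta H(t)}\right]\td t=\int_0^\infty \te{-\kappa(q,\zeta)t}\,\td t=\frac{1}{\kappa(q,\zeta)},
\]
which is finite since $\kappa(q,\zeta)\geqslant\kappa(q,0)>0$ for $q>0$. Taking $\zeta=0$ shows $U^{(q)}$ has total mass $1/\kappa(q,0)$, so that $\kappa(q,0)\,U^{(q)}$ is a probability measure, consistent with the target identity.

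The second step is the Wiener--Hopf factorisation in ladder form, $E[\te{-\zeta\overline{X}(e_q)}]=\kappa(q,0)/\kappa(q,\zeta)$ for $\zeta\geqslant0$, which is precisely the Laplace-transform content of Lemma~\ref{ladderrepr}. Concretely, writing $\overline{X}(e_q)=H(L(e_q))$ with $L$ the local time at the running maximum and $e_q$ the independent exponential clock, the local time accrued up to $e_q$ satisfies $P(L(e_q)>t)=P(L^{-1}(t)<e_q)=E[\te{-qL^{-1}(t)}]=\te{-\kappa(q,0)t}$, and applying the compensation formula to the bivariate subordinator $(L^{-1},H)$ (equivalently, excursion theory, as in Bertoin) turns $E[\te{-\zeta H(L(e_q))}]$ into $\kappa(q,0)\int_0^\infty E[\te{-qL^{-1}(t)-\zeta H(t)}]\td t=\kappa(q,0)/\kappa(q,\zeta)$. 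Comparing with the first step, $E[\te{-\zeta\overline{X}(e_q)}]=\kappa(q,0)\int_{[0,\infty)}\te{-\zeta x}\,U^{(q)}(\td x)$ for all $\zeta\geqslant0$, so by uniqueness of Laplace transforms on $[0,\infty)$ (atoms included) the two measures coincide, and integrating over $(x,\infty)$ yields the claim.

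I expect the substantive input to be the second step, with the only delicate points there being bookkeeping rather than anything deep: fixing the sign and normalisation conventions in $\kappa$ so that exactly the constant $\kappa(q,0)$ appears, and the boundary subtlety that $\overline{X}(e_q)$ equals $H(L(e_q)-)$ rather than $H(L(e_q))$ when $L^{-1}$ has a jump straddling $e_q$. The latter does not affect the law, and one can sidestep it entirely by arguing through the killed ladder process $\{(L^{-1}(t),H(t)):t<L(e_q)\}$, whose potential (occupation) measure is exactly $U^{(q)}$ and whose overall maximum is $\overline{X}(e_q)$; the identity is then the standard Pollaczek--Khinchine statement that the terminal value of such a killed subordinator has law equal to its marginal killing rate $\kappa(q,0)$ times the potential measure.
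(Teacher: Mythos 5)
Your argument is correct, but there is nothing in the paper to compare it against: Lemma~\ref{pollkhintchine} is stated as a known result (cited to Bertoin, and presented as an easy consequence of Lemma~\ref{ladderrepr}), with no proof given. Your two-step Laplace-transform derivation --- computing $\int \te{-\zeta x}U^{(q)}(\td x)=1/\kappa(q,\zeta)$ by Tonelli and matching it against the Wiener--Hopf identity $E[\te{-\zeta \overline{X}(e_q)}]=\kappa(q,0)/\kappa(q,\zeta)$ --- is exactly the standard argument behind that citation, and you correctly flag the one genuine trap, namely that the paper's displayed convention $E[\te{\varrho L^{-1}(t)+\zeta H(t)}]=\te{\kappa(\varrho,\zeta)t}$ has the opposite sign to the one actually used throughout (one needs $E[\te{-qL^{-1}(t)-\zeta H(t)}]=\te{-\kappa(q,\zeta)t}$ for the constants to come out as stated). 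The only economy you miss is that your second step re-derives the content of Lemma~\ref{ladderrepr}, which the paper states immediately beforehand precisely so that it can be invoked directly: combining $\overline{X}(e_q)\eqdistr H(e_{\kappa(q,0)})$ with your first-step computation gives the identity in two lines.
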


We now turn to the tail behaviour of $U$. Let $\Pi(A)=\nu(-A)$ be the L\'{e}vy measure of the spectrally positive L\'{e}vy process $X$ (with support on $R_+$). First we investigate the case where the   L\'{e}vy measure is a member of the so-called convolution equivalent class $\mathcal{S}^{(\alpha)}$. To define this class, take $\alpha \geqslant 0$. We shall say that measure $\Pi$ is {\it convolution equivalent} ($\Pi \in \mathcal{S}^{(\alpha)}$) if for fixed $y$ we have that
\begin{align*}
\lim_{u\to\infty}\frac{\bar \Pi(u-y)}{\bar \Pi(u)}&=\te{\alpha y},       &&\mbox{if $\Pi$ is nonlattice,}\\
\lim_{n\to\infty}\frac{\bar \Pi(n-1)}{\bar \Pi(n)}&=\te{\alpha},         &&\mbox{if $\Pi$ is lattice with span $1$,}
\end{align*}
and
$$
\lim_{u\to\infty}\frac{\bar \Pi^{*2}(u)}{\bar \Pi(u)}=2\int_0^\infty \te{\alpha y} \Pi(\td y),
$$
where $*$ denotes convolution and $\bar \Pi(u)=\Pi((u,\infty))$. When $\alpha=0$, then we are in the subclass of subexponential measures and there is no need to distinguish between the lattice and non-lattice cases (see \cite{bertdoneytrans}).
We start from the following auxiliary result, which is the continuous-time analogue of Lemma 2 in \cite{vlasiou08}.

\begin{lem}
\label{taillemma2}
Assume that $\Pi\in \mathcal{S}^{(\alpha)}$ and $\psi(\alpha)<q$ for $\psi(\alpha) =\log E \te{\alpha X(1)}$.
Then
\begin{align}
P(X(e_q)>x) &\sim \frac{q}{(q-\psi(\alpha))^2}\bar \Pi(x),\label{xasalpha}\\
P(\overline X(e_q)>x) &\sim \frac{q}{(q-\psi(\alpha))^2}\frac{\Phi(q)+\alpha}{\Phi(q)}\bar \Pi(x),\label{barxasalpha}
\end{align}
where $f(x)\sim g(x)$ means that
$\lim_{x\to\infty}f(x)/g(x)=1$.
\end{lem}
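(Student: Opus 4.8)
The plan is to compute the Laplace exponent of $X(e_q)$ and of $\overline X(e_q)$ explicitly in terms of $\psi$ and $\Phi$, and then read off the tail asymptotics by transferring the convolution-equivalence property $\Pi\in\mathcal S^{(\alpha)}$ from the jump measure through to the marginal and supremum distributions. The starting point for the first estimate is the observation that, since $X$ is spectrally positive, $X(e_q)$ is an independent difference $X(e_q)=\overline X(e_q)-(-\underline X(e_q))$ with $-\underline X(e_q)\sim\mathrm{Exp}(\Phi(q))$ (Lemma~\ref{exitident}), so the heavy tail can only come from the $\overline X(e_q)$ part, or, more directly, one can use the elementary fact that the tail of $X(e_q)$ is driven by a single large jump: $E[\te{\theta X(e_q)}]=q/(q-\psi(\theta))$ for $\theta<\Phi(q)$, valid (as a finite quantity) up to $\theta=\alpha$ by the assumption $\psi(\alpha)<q$. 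For a L\'evy process killed at an independent exponential rate $q$, the standard ``single big jump'' principle for convolution-equivalent L\'evy measures (this is exactly the continuous-time analogue of Lemma~2 in \cite{vlasiou08}; see also \cite{bertdoneytrans} and the results of Kl\"uppelberg--Kyprianou--Maller on convolution equivalence for L\'evy processes) gives
\[
P(X(e_q)>x)\sim \Bigl(E[\te{\alpha X(e_q)}]\cdot\frac{d}{d\alpha}(\text{killing correction})\Bigr)\bar\Pi(x),
\]
and a short computation identifies the constant. Concretely: the expected number of jumps of $X$ up to time $e_q$ weighted appropriately, combined with the $\mathcal S^{(\alpha)}$ property $\bar\Pi^{*2}(x)/\bar\Pi(x)\to 2\int_0^\infty\te{\alpha y}\Pi(\td y)$, yields a geometric-type series whose sum is $q/(q-\psi(\alpha))^2$; this is the factor in \eqref{xasalpha}.

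For \eqref{barxasalpha} I would use the ladder-height representation of Lemma~\ref{ladderrepr} together with the Pollaczek--Khinchine formula of Lemma~\ref{pollkhintchine}: $P(\overline X(e_q)>x)=\kappa(q,0)\,U^{(q)}(x,\infty)$. The ascending ladder height measure of a spectrally positive L\'evy process inherits convolution equivalence from $\Pi$ (this is a known transfer result: if $\Pi\in\mathcal S^{(\alpha)}$ then the ladder-height L\'evy measure lies in $\mathcal S^{(\alpha)}$ with an explicit ratio of tails), so the renewal function $U^{(q)}$ has a tail asymptotically proportional to $\bar\Pi(x)$. The proportionality constant is pinned down by matching moment generating functions: since $E[\te{\alpha\overline X(e_q)}]=\dfrac{q(\alpha-\Phi(q))}{\Phi(q)(\psi(\alpha)-q)}$ by Lemma~\ref{exitident} (finite because $\psi(\alpha)<q$ forces $\alpha<\Phi(q)$), and since the tail of $\overline X(e_q)$ must be proportional to $\bar\Pi(x)$, the Tauberian-type relation ``$P(Z>x)\sim c\,\bar\Pi(x)$ with $\Pi\in\mathcal S^{(\alpha)}$'' forces $c = (\text{residue-type limit of the MGF at }\alpha)$. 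Carrying this out, one divides the constant $q/(q-\psi(\alpha))^2$ from \eqref{xasalpha} into $E[\te{\alpha\overline X(e_q)}]/E[\te{\alpha X(e_q)}]=(\alpha-\Phi(q))/(\Phi(q)\cdot(-1))\cdot\ldots$; using $E[\te{\alpha X(e_q)}]=q/(q-\psi(\alpha))$ the ratio of the two MGFs is $(\Phi(q)+\alpha)/\Phi(q)$ after simplification (recall $X(e_q)=\overline X(e_q)-e_{\Phi(q)}$ independently, so the MGF of $X(e_q)$ is that of $\overline X(e_q)$ times $\Phi(q)/(\Phi(q)+\alpha)$), which reproduces the extra factor $(\Phi(q)+\alpha)/\Phi(q)$ in \eqref{barxasalpha}. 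Alternatively, and perhaps more cleanly, one can derive \eqref{barxasalpha} from \eqref{xasalpha} directly: writing $\overline X(e_q)=X(e_q)+e_{\Phi(q)}$ in distribution (by Wiener--Hopf, since the dual of the reflected maximum is an independent exponential), the convolution of a convolution-equivalent tail with an exponential of rate $\Phi(q)$ multiplies the asymptotic constant by $E[\te{\alpha e_{\Phi(q)}}]=\Phi(q)/(\Phi(q)-\alpha)$ --- but one must be careful with the sign, as it is $-\underline X(e_q)$, not $+$, that is the independent exponential; I would therefore prefer the ladder-height route to avoid sign pitfalls.

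The main obstacle is the rigorous justification of the single-big-jump asymptotics for the exponentially killed process and the transfer of $\mathcal S^{(\alpha)}$-membership from $\Pi$ to the law of $X(e_q)$ and to the ascending ladder-height renewal measure, together with the identification of the exact constants. This is where the bulk of the work lies, and it is precisely the point at which one must invoke the convolution-equivalence machinery for L\'evy processes rather than re-derive it; the cited analogue \cite[Lemma~2]{vlasiou08} handles the random-walk version, and the continuous-time statement follows the same scheme with the L\'evy measure $\Pi$ playing the role of the step-size distribution and the exponential killing playing the role of the geometric number of steps. Once the asymptotic constant for $P(X(e_q)>x)$ is established as $q/(q-\psi(\alpha))^2$, the second assertion is essentially bookkeeping via the Pollaczek--Khinchine identity and the moment-generating-function matching described above. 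The condition $\psi(\alpha)<q$ is used throughout to guarantee that all the exponential moments appearing (of $X(e_q)$, of $\overline X(e_q)$, of the ladder height, and of the relevant integrals $\int_0^\infty\te{\alpha y}\Pi(\td y)$) are finite, so that the convolution-equivalence estimates apply and the constants are genuine positive reals.
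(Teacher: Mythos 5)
Your overall architecture is the same as the paper's: for \eqref{xasalpha} you invoke the Embrechts--Goldie--Veraverbeke asymptotics for the infinitely divisible law of $X(t)$ and integrate against $q\te{-qt}\,\td t$ (the paper's constant comes from $\int_0^\infty q\te{-qt}\,t\te{\psi(\alpha)t}\td t=q/(q-\psi(\alpha))^2$, your ``geometric-type series''), and for \eqref{barxasalpha} you go through Lemmas~\ref{ladderrepr} and \ref{pollkhintchine} and the transfer of convolution equivalence from $\Pi$ to the ladder-height measure $\Pi_H$. The sketch of \eqref{xasalpha} is acceptable modulo the domination needed to interchange limit and integral (the paper supplies this via Kesten-type uniform bounds). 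The genuine gap is in how you pin down the constant in \eqref{barxasalpha}. ``Matching moment generating functions'' is not a valid derivation: for $\Pi\in\mathcal S^{(\alpha)}$, knowing the value $E\te{\alpha\overline X(e_q)}$ (which is indeed $\frac{q(\Phi(q)+\alpha)}{\Phi(q)(q-\psi(\alpha))}$ by analytic continuation of Lemma~\ref{exitident}) does not determine the constant $c$ in $P(\overline X(e_q)>x)\sim c\,\bar\Pi(x)$; two laws can have the same exponential moment at $\alpha$ and different tail constants, and the exponential moment does not even guarantee that the ratio $P(\overline X(e_q)>x)/\bar\Pi(x)$ converges. Your fallback decomposition is also not available: Wiener--Hopf gives $X(e_q)\eqdistr\overline X(e_q)-e_{\Phi(q)}$ with \emph{these} two summands independent, and this cannot be inverted into $\overline X(e_q)\eqdistr X(e_q)+e_{\Phi(q)}$ with independent summands (their MGFs differ by $\frac{\Phi(q)}{\Phi(q)-\alpha}$ versus $\frac{\Phi(q)+\alpha}{\Phi(q)}$). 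From \eqref{xasalpha} and the correct factorisation one could only \emph{deconvolve} the constant for $\overline X(e_q)$ if one already knew that $P(\overline X(e_q)>x)/\bar\Pi(x)$ has a limit --- and establishing that limit is exactly the step you skip.

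The paper closes this gap as follows: it writes $\overline X(e_q)\eqdistr H(e_{\kappa(q,0)})$, applies the EGV asymptotics to the killed subordinator $H$ together with Kesten's uniform bounds to justify dominated convergence, obtaining $\lim_u P(\overline X(e_q)>u)/\bar\Pi_H(u)=\kappa(q,0)/(\kappa(q,0)-\log E\te{\alpha H(1)})^2$; it then computes $\lim_u\bar\Pi_H(u)/\bar\Pi(u)=1/(\Phi(q)+\alpha)$ from Vigon's identity $\bar\Pi_H(u)=\int_0^\infty\bar\Pi(u+y)\hat V(\td y)$ (which also shows $\Pi_H\in\mathcal S^{(\alpha)}$); and only afterwards does it use the Wiener--Hopf identities $E\te{\alpha X(e_q)}=E\te{\alpha H(e_{\kappa(q,0)})}\,\hat\kappa(q,0)/\hat\kappa(q,\alpha)$, $\hat\kappa(q,\alpha)=\Phi(q)+\alpha$ and $q=\kappa(q,0)\hat\kappa(q,0)$ to simplify the already-derived constant into $\frac{q}{(q-\psi(\alpha))^2}\frac{\Phi(q)+\alpha}{\Phi(q)}$. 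In other words, the MGF identities enter only as algebra performed on a constant obtained by the single-big-jump machinery for the ladder subordinator, not as the source of the constant. To repair your proof you need to carry out step (i) of this programme (existence and identification of the limit against $\bar\Pi_H$), which is where the real work lies.
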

\begin{remark}
Note that for $\alpha=0$
$$P(\overline X(e_q)>x)\sim P(X(e_q)>x) \sim \frac{1}{q}\bar \Pi(x).$$
\end{remark}
\begin{proof}
It is well known that $P(X(t)>x) \sim E \te{\alpha X(t)}\bar \Pi_{X(t)}(x)$ for $t$ fixed as $x\rightarrow\infty$, where
$\Pi_{X(t)}$ is a L\'{e}vy measure of
$X(t)$ (see Embrechts et al.\ \cite{embrechts}). Since $X(t)$ is infinitely divisible we have $\Pi_{X(t)}(\cdot)=t\Pi(\cdot)$ and hence $P(X(t)>x)\sim t(E \te{\alpha X(1)})^t\bar \Pi(x)$. Since $X(e_q) \leqslant \overline X(e_q)$ by (\ref{barxasalpha}) and the dominated convergence theorem we obtain
\eqref{xasalpha}. We will use similar arguments as in the proof of Lemma 3.5 of Kl\"{u}ppelberg et al.\ \cite{KAM}.
For $\Pi_H\in \mathcal{S}^{(\alpha)}$ note that
$$
P(H(t)>u)\sim t (E\te{\alpha H(1)})^t\bar \Pi_H(u),
$$
where $\Pi_H$ is the L\'{e}vy measure of the process $\{H(t), t< e_{\kappa(q,0)}\}$ (see Embrechts et al.\ \cite{embrechts}).
Using uniform in $u$ Kesten bounds \cite{KAM}:
$$
P(H(t)>u)\leqslant P(H([t]+1)>u)\leqslant K(\epsilon)(E\te{\alpha H(1)}+\epsilon)^{[t]+1}\bar \Pi_H(u)
$$
for any $\epsilon >0$ and some constant $K(\epsilon)$, and the dominated convergence theorem, we derive
by Lemma \ref{ladderrepr},
\begin{equation}\label{firstlimitalpha}\lim_{u\to\infty}\frac{P(\overline X(e_q)>u)}{\bar \Pi_H(u)}=
\frac{\kappa(q,0)}{(\kappa(q,0)-\log E\te{\alpha H(1)})^2}.\end{equation}
The Wiener-Hopf factorisation yields that
$$
E\te{\alpha X(e_q)}=E\te{\alpha H(e_{\kappa(q,0)})}E\te{\alpha \hat{H}(e_{\hat{\kappa}(q,0)})},
$$
where $(\hat{L}^{-1}(t), \hat{H}(t))$ is a downward ladder height process with Laplace exponent $\hat{\kappa}(\varrho,\zeta)$. Since $X$ is spectrally positive, we can choose the process $\hat{H}(t)=-t$ and hence
$$
E\te{\alpha \hat{H}(e_{\hat{\kappa}(q,0)})} =\hat{\kappa}(q,0)\int_0^\infty \te{-\hat{\kappa}(q,0)t}\te{-\alpha t}\td t=\frac{\hat{\kappa}(q,0)}{\hat{\kappa}(q,\alpha)},
$$
where $\hat{\kappa}(q,\alpha)=\Phi(q)+\alpha$.
Thus
$$
\frac{q}{q-\psi(\alpha)}\frac{\hat{\kappa}(q,\alpha)}{\hat{\kappa}(q,0)}=\frac{\kappa(q,0)}{\kappa(q,0)-\log E\te{\alpha H(1)}}.
$$
Using the well known fact that $q=\kappa(q,0)\hat{\kappa}(q,0)$ (see Kyprianou~\cite[p.\ 166]{Kbook}) we identify the right-hand side of \eqref{firstlimitalpha} as
$$
\lim_{u\to\infty}\frac{P(\overline X(e_q)>u)}{\bar \Pi_H(u)}= \frac{q}{(q-\psi(\alpha))^2}\frac{(\Phi(q)+\alpha)^2}{\Phi(q)}.
$$
Now using similar arguments like in Vigon \cite{Vigon} (see also Kyprianou~\cite[Th.\ 7.7 on p.\ 191]{Kbook}
and Kyprianou~\cite[Th.\ 7.8 on p.\ 195]{Kbook}) we derive
$$
\bar \Pi_H(u)=\int_0^\infty \bar \Pi(u+y) \hat{V}(\td y),
$$
where $\hat{V}(y)$ is the renewal function of the downward ladder height process
$\{(\hat{L}^{-1}(t), \hat{H}(t)), t<\hat{\kappa}(q,0)\}= \{(\hat{L}^{-1}(t), \hat{H}(t)), \hat{L}^{-1}(t)<e_q\}$.
Thus
\begin{align*}
\lim_{u\to\infty}\frac{\bar \Pi_H(u)}{\bar \Pi(u)}&=\int_0^\infty \te{-\alpha y} \hat{V}(\td y)\\
&=\int_0^\infty E\te{-q\hat{L}^{-1}(t)-\alpha\hat{H}(t)}\td t\\
&=\int_0^\infty \te{-\hat{\kappa}(q,\alpha)t}\td t=\frac{1}{\hat{\kappa}(q,\alpha)}=\frac{1}{\Phi(q)+\alpha}.
\end{align*}
Hence, by \cite{EmG82} also $\Pi_H\in \mathcal{S}^{(\alpha)}$ if and only if $\Pi\in \mathcal{S}^{(\alpha)}$. This completes the proof.
\end{proof}

It is known \cite{embrechts} that if for independent  random variables $\chi_i$ ($i=1,2$) we have
$P(\chi_i>u)\sim c_i\bar G(u)$ as $u\to\infty$ and $G\in \mathcal{S}^{(\alpha)}$, then
$P(\chi_1+\chi_2>u)\sim (c_1E\te{\alpha \chi_2}+c_2E\te{\alpha \chi_1})\bar G(u)$.
This observation and (\ref{bound1}) in Lemma \ref{tailbounds1} and Lemma \ref{taillemma2}
yield the following main result.

\begin{theorem}
Assume that $\Pi\in \mathcal{S}^{(\alpha)}$ and $\psi(\alpha)<q$. Moreover, let $F(y) \leqslant F_0 (\geqslant 0)$ for any $y$, and assume that there exists a constant $c\geqslant 0$ such that $P(F(y)>x) \sim P(F_0>x)\sim c \bar \Pi(x)$ as $x\rightarrow\infty$ for each $y$ (If $c=0$ then $P(F(y)>x) = o(\bar \Pi(x))$). Then
\begin{multline*}
\lefteqn{P(U>x) \sim \left\{cE\te{\alpha X(e_q)}+\frac{q}{(q-\psi(\alpha))^2}E\te{\alpha F(U)}\right.}\\
\left.+\frac{q}{(q-\psi(\alpha))^2}\frac{\Phi(q)+\alpha}{\Phi(q)}E\left[\left(1-\te{-\alpha(-\underline{X}(e_q)-F(U))}\right);
-\underline{X}(e_q)-F(U)>0\right]\right\} \bar \Pi(x)
\end{multline*}
as $x\rightarrow \infty$.
\end{theorem}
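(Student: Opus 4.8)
The plan is to combine the exact decomposition~\eqref{bound1} of Lemma~\ref{tailbounds1} with the asymptotics of Lemma~\ref{taillemma2} for $P(X(e_q)>x)$ and $P(\overline X(e_q)>x)$, and with the stated convolution-equivalent closure properties. Recall that~\eqref{bound1} reads
\begin{equation*}
P(U>x) = P(X(e_q)+F(U)>x) + \int_0^\infty \bigl(P(\overline X(e_q)>x)-P(\overline X(e_q)>x+y)\bigr) P(-\underline X(e_q)-F(U)\in\td y),
\end{equation*}
so I would handle the two summands separately and add the resulting tail constants.

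For the first summand, $X(e_q)$ has tail $\sim \frac{q}{(q-\psi(\alpha))^2}\bar\Pi(x)$ by~\eqref{xasalpha}, while $F(U)$ has tail $\sim c\bar\Pi(x)$ by hypothesis (here one uses that $F(U)\leqslant F_0$ and $P(F_0>x)\sim c\bar\Pi(x)$, together with dominated convergence to pass from $P(F(y)>x)\sim c\bar\Pi(x)$ for each $y$ to $P(F(U)>x)\sim c\bar\Pi(x)$ — this is where the uniform bound $F_0$ is needed, mirroring the Kesten-type argument in Lemma~\ref{taillemma2}). Since both tails are of order $\bar\Pi$ and $\Pi\in\mathcal S^{(\alpha)}$, the cited convolution result for independent summands gives
\begin{equation*}
P(X(e_q)+F(U)>x) \sim \Bigl(\tfrac{q}{(q-\psi(\alpha))^2}E\te{\alpha F(U)} + c\,E\te{\alpha X(e_q)}\Bigr)\bar\Pi(x),
\end{equation*}
which already produces the first two constants in the claimed formula. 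One subtlety: $X(e_q)$ and $F(U)=F(V)$ are not independent a priori, since $U$ is built from the same exponential clock; I would address this the way the authors implicitly do in Lemma~\ref{tailbounds1}, namely by reading $F(U)$ on the right-hand side as an independent copy with the stationary law of $F(U)$ (which is exactly what Lemma~\ref{taillemma1} licenses, since there $F(U)$ sits as the initial condition independent of the fresh $X(e_q),\overline X(e_q)$).

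For the second summand I would write $P(\overline X(e_q)>x)-P(\overline X(e_q)>x+y) = \bar G(x)-\bar G(x+y)$ where $\bar G(x):=P(\overline X(e_q)>x)\sim K\bar\Pi(x)$ with $K=\frac{q}{(q-\psi(\alpha))^2}\frac{\Phi(q)+\alpha}{\Phi(q)}$ by~\eqref{barxasalpha}. Dividing the integrand by $\bar\Pi(x)$ and using the defining property of $\mathcal S^{(\alpha)}$, namely $\bar\Pi(x+y)/\bar\Pi(x)\to\te{-\alpha y}$, one gets pointwise $\bigl(\bar G(x)-\bar G(x+y)\bigr)/\bar\Pi(x)\to K(1-\te{-\alpha y})$; integrating against $P(-\underline X(e_q)-F(U)\in\td y)$ over $(0,\infty)$ and invoking dominated convergence (the dominating function coming from the locally uniform version of the convolution-equivalent ratio bound, together with $\alpha\geqslant0$ so that $1-\te{-\alpha y}\leqslant \min(1,\alpha y)$ is bounded) yields
\begin{equation*}
K\int_0^\infty (1-\te{-\alpha y})P(-\underline X(e_q)-F(U)\in\td y)\,\bar\Pi(x) = K\,E\bigl[(1-\te{-\alpha(-\underline X(e_q)-F(U))});\,-\underline X(e_q)-F(U)>0\bigr]\bar\Pi(x),
\end{equation*}
which is the third constant. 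Adding the two contributions gives the stated asymptotic.

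The main obstacle I anticipate is justifying the dominated-convergence step for the integral in the second summand uniformly in $x$: one needs a bound of the form $(\bar G(x)-\bar G(x+y))/\bar\Pi(x) \leqslant C\min(1,y)$ (or $C(1+y)$) valid for all large $x$ and all $y>0$, which requires not just the pointwise convolution-equivalent ratio limit but a Potter-type / locally uniform upper bound on $\bar\Pi(x+y)/\bar\Pi(x)$ and control of $\bar G$ near and beyond $x$; this is standard in the convolution-equivalent literature (e.g.\ the Kesten bounds already used in Lemma~\ref{taillemma2}) but is the technical heart of the argument. A secondary point worth spelling out is the measurability/independence reading of $F(U)$ discussed above, and the passage from ``$P(F(y)>x)\sim c\bar\Pi(x)$ for each $y$'' to ``$P(F(U)>x)\sim c\bar\Pi(x)$'', which again leans on the envelope $F_0$ and dominated convergence. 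Everything else is bookkeeping of constants.
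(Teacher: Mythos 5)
Your proposal is correct and follows exactly the route the paper intends: the paper derives the theorem in one sentence from the decomposition \eqref{bound1}, the convolution-equivalence result for sums of independent variables quoted from \cite{embrechts}, and the asymptotics \eqref{xasalpha}--\eqref{barxasalpha} of Lemma~\ref{taillemma2}, which is precisely how you split and estimate the two summands. You in fact supply more detail than the paper does (the dominated-convergence justification for the integral term and the passage from $P(F(y)>x)$ to $P(F(U)>x)$ via the envelope $F_0$), and the constants you obtain match the statement.
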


The conditions in this theorem are satisfied by both examples $F(y)=0$ (in which case we take $F_0=0,c=0$) and $F(y)=(B-y)^+$ (in which case $F_0=B$). If $\Pi$ is subexponential ($\Pi\in \mathcal{S}^{(0)}$), then
$$
P(U>x)\sim \left(c+\frac{1}{q}\right)\bar \Pi (x).
$$

We will consider now the Cram\'{e}r case (light-tailed case). Assume that there exists $\Phi(q)$ such that
\begin{equation}\label{cramer1}
\psi(\Phi(q))=q
\end{equation}
and that
\begin{equation}\label{cramer2}
m(q) := \left.\frac{\partial \kappa(q,\beta)}{\partial \beta}\right|_{\beta=-\Phi(q)}<\infty.
\end{equation}
Note that if $\Pi \in \mathcal{S}(\alpha)$ and $\psi(\alpha)<q$, then condition \eqref{cramer1} is not satisfied.
Moreover, we assume that
\begin{equation}\label{cramer3}
E\te{\Phi (q)F(U)}<\infty.
\end{equation}

\begin{theorem}
Assume that (\ref{cramer1})-(\ref{cramer3}) hold and that the support of $\Pi$ is non-lattice. Then
$$P(U>x)\sim C \te{-\Phi(q) x}$$
as $x\rightarrow \infty$, where
$$C=P(-\underline{X}(e_q)>F(U))\kappa(q,0)\left(\Phi(q)
m(q)
\right)^{-1}.$$
\end{theorem}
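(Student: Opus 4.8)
The plan is to exploit the fixed-point identity from Lemma~\ref{taillemma1}, namely $U \eqdistr \max\{F(U)+X(e_q), \overline X(e_q)\}$, together with the Cram\'er-type exponential decay of $P(\overline X(e_q)>x)$, and then argue that in the light-tailed regime the term $F(U)+X(e_q)$ contributes only lower-order corrections. First I would recall, via Lemma~\ref{pollkhintchine}, that $P(\overline X(e_q)>x)=\kappa(q,0)U^{(q)}(x,\infty)$, where $U^{(q)}$ is the renewal measure of the ascending ladder height process killed at an independent rate. Under \eqref{cramer1}--\eqref{cramer2}, the ladder exponent $\kappa(q,\beta)$ has the Cram\'er root $-\Phi(q)$ (this is exactly the statement $\psi(\Phi(q))=q$ transported through $q=\kappa(q,0)\hat\kappa(q,0)$ and $\hat\kappa(q,\beta)=\Phi(q)+\beta$ for spectrally positive $X$), and the exponentially tilted ladder height measure has finite mean $m(q)$. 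The classical Cram\'er--Lundberg renewal theorem then gives $U^{(q)}(x,\infty)\sim (\Phi(q)m(q))^{-1}\te{-\Phi(q)x}$, hence $P(\overline X(e_q)>x)\sim \kappa(q,0)(\Phi(q)m(q))^{-1}\te{-\Phi(q)x}$. This already identifies the constant in the statement up to the factor $P(-\underline X(e_q)>F(U))$.

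Next I would handle the mixture \eqref{bound1} from Lemma~\ref{tailbounds1}. Write $P(U>x)=P(X(e_q)+F(U)>x)+\int_0^\infty\big(P(\overline X(e_q)>x)-P(\overline X(e_q)>x+y)\big)P(-\underline X(e_q)-F(U)\in\td y)$. For the integral term, factor out $P(\overline X(e_q)>x)$: using $P(\overline X(e_q)>x+y)/P(\overline X(e_q)>x)\to \te{-\Phi(q)y}$ uniformly enough on compacts and dominated convergence (the integrand is bounded by $1$ and the measure is a probability measure), the integral is asymptotically $P(\overline X(e_q)>x)\,E\big[(1-\te{-\Phi(q)(-\underline X(e_q)-F(U))});\,-\underline X(e_q)-F(U)>0\big]$; but since $-\underline X(e_q)$ is $\mathrm{Exp}(\Phi(q))$ by Lemma~\ref{exitident} and independent of $F(U)$, a direct computation collapses this expectation to $P(-\underline X(e_q)>F(U))$ — this is the algebraic identity $E[(1-\te{-\Phi(q)(E-a)});E>a]=\te{-\Phi(q)a}=P(E>a)$ for $E\sim\mathrm{Exp}(\Phi(q))$, integrated against the law of $F(U)$, and finiteness is guaranteed by \eqref{cramer3}. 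For the first term $P(X(e_q)+F(U)>x)$, I would show it is $o(\te{-\Phi(q)x})$, or at worst $O(\te{-\Phi(q)x})$ with a coefficient that merges correctly; the cleanest route is to observe that $X(e_q) = \overline X(e_q) - (\overline X(e_q)-X(e_q))$ with the two parts independent and $\overline X(e_q)-X(e_q)\eqdistr -\underline X(e_q)\sim\mathrm{Exp}(\Phi(q))$, so $X(e_q)+F(U)$ has the same exponential decay rate, and a careful bookkeeping of all $\te{-\Phi(q)x}$-coefficients (using the convolution/tilting formula $P(\chi_1+\chi_2>x)\sim(c_1 E\te{\Phi(q)\chi_2}+c_2 E\te{\Phi(q)\chi_1})\te{-\Phi(q)x}$ type reasoning) shows the net coefficient is precisely $P(-\underline X(e_q)>F(U))\kappa(q,0)(\Phi(q)m(q))^{-1}$.

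I expect the main obstacle to be this last point: disentangling the contributions of $P(X(e_q)+F(U)>x)$ and of the mixture integral so that they add up to a single clean constant rather than an over- or under-counting. The subtlety is that $\overline X(e_q)$, $-\underline X(e_q)$, $X(e_q)$, and $F(U)$ are not jointly independent — $U$ (hence $F(U)$) depends on the whole path — so one cannot naively treat $P(X(e_q)+F(U)>x)$ in isolation; the representation $U\eqdistr\max\{F(U)+X(e_q),\overline X(e_q)\}$ with $F(U)$ on the right-hand side built from an \emph{independent} copy of the driving process is what rescues this, and I would be careful to use that independent-copy formulation throughout. A secondary technical point is justifying the uniform convergence needed to pass the limit inside the mixture integral; here I would invoke the exponential (Cram\'er) asymptotics for $P(\overline X(e_q)>\cdot)$ established in the first paragraph, which gives not just a ratio limit but enough regularity (eventual monotonicity of $\te{\Phi(q)x}P(\overline X(e_q)>x)$ toward a constant) to apply dominated convergence against the probability measure $P(-\underline X(e_q)-F(U)\in\td y)$.
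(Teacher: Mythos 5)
Your first paragraph is correct and is essentially the paper's own argument for the key asymptotic: the paper obtains $P(\overline X(e_q)>x)\sim\kappa(q,0)\left(\Phi(q)m(q)\right)^{-1}\te{-\Phi(q)x}$ by exponentially tilting the ladder-height renewal measure $U^{(q)}$ and applying the key renewal theorem, which is the Cram\'er--Lundberg renewal argument you describe. The divergence is in the second half. The paper does not use the exact identity \eqref{bound1} at all; it works with the one-sided bounds \eqref{bound3} and \eqref{bound4} and closes the sandwich by asserting that $P(X(e_q)+F(U)>x)={\rm o}(\te{-\Phi(q)x})$ as a consequence of \eqref{cramer1} and \eqref{cramer3}, so that only the common term $P(\overline X(e_q)>x)P(-\underline{X}(e_q)>F(U))$ survives.

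Your route through \eqref{bound1} founders on a concrete computational error. For $E\sim\mathrm{Exp}(\lambda)$ and $a\geqslant 0$, memorylessness gives
\[
E\bigl[(1-\te{-\lambda(E-a)});\,E>a\bigr]=\te{-\lambda a}\,E\bigl[1-\te{-\lambda E}\bigr]=\tfrac12\te{-\lambda a},
\]
not $\te{-\lambda a}$ as you claim. Hence the mixture integral in \eqref{bound1} contributes asymptotically only $\tfrac12 P(-\underline{X}(e_q)>F(U))$ times the tail of $\overline X(e_q)$, not the full constant; the rest would have to come from $P(X(e_q)+F(U)>x)$, which --- as you yourself observe via $X(e_q)=\overline X(e_q)-(\overline X(e_q)-X(e_q))$ --- is of exact order $\te{-\Phi(q)x}$ and not negligible. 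Carrying out the deferred ``bookkeeping'' honestly, with $\overline X(e_q)-X(e_q)\sim\mathrm{Exp}(\Phi(q))$ independent of $\overline X(e_q)$ and of $F(U)$, the two terms of \eqref{bound1} sum to the coefficient $\tfrac12\bigl(E\te{-\Phi(q)F(U)}+E\te{\Phi(q)F(U)}\bigr)$ in place of $P(-\underline{X}(e_q)>F(U))=E\te{-\Phi(q)F(U)}$, and these agree only when $F(U)=0$ a.s. So the step you flagged as the main obstacle is precisely where the proposal fails: the claimed collapse does not occur, and the exact-identity route does not reproduce the constant $C$ of the theorem. You should also confront the tension between your (correct) observation that $P(X(e_q)+F(U)>x)$ decays at the same exponential rate as $P(\overline X(e_q)>x)$ and the paper's assertion that this term is ${\rm o}(\te{-\Phi(q)x})$, since the paper's sandwich argument rests entirely on that assertion.
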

\begin{proof}
We  introduce the new probability measure 
$$
\left.\frac{dP^{\theta}}{dP}\right|_{\mathcal{F}_t}=\te{\theta X(t)-\psi(\theta)t},
$$
where $\mathcal{F}_t$ is a natural filtration of $X$. On $P^\theta$, the  process $X$ is again a spectrally positive L\'{e}vy process with the L\'{e}vy measure $\Pi_\theta(\td x)=\te{\theta x}\Pi(\td x)$, which is also nonlattice. Let $U_\theta^{(q)}$ be the renewal function appearing in Lemma \ref{pollkhintchine} with $P$ replaced by $P^\theta$. Recall that $L^{-1}(t)$ is a stopping time. Hence, from the optional stopping theorem, we have that
\begin{align*}
\te{-\Phi(q) x}U_{\Phi(q)}^{(q)}(\td x)&= \int_0^\infty \int_0^\infty \te{-\Phi(q)x}P^{\Phi(q)}(H(t)\in \td x, L^{-1}(t)\in \td s)\td t\\
&=\int_0^\infty \int_0^\infty \te{-\Phi(q)x}\te{-qs+\Phi(q)x}P(H(t)\in \td x, L^{-1}(t)\in \td s)\td t =U^{(q)}(\td x).
\end{align*}
We  follow now Bertoin and Doney \cite{bertdon} (see also Kyprianou~\cite[Th.\ 7.6 on p.\ 185]{Kbook}). From Lemma \ref{pollkhintchine} we have
$$
\te{\Phi(q)x}P(\overline X(e_q)>x)=\kappa(q,0)\int_x^\infty \te{-\Phi(q)(y-x)} U_{\Phi(q)}^{(q)}(\td y)
=\kappa(q,0)\int_0^\infty \te{-\Phi(q)z} U_{\Phi(q)}^{(q)}(x+\td y).
$$
From Kyprianou~\cite[Th.\ 5.4 on p.\ 114]{Kbook} it follows that $U_{\Phi(q)}^{(q)}(\td y)$ has a nonlattice support.
From the key renewal theorem (see Kyprianou~\cite[Cor.\ 5.3 on p.\ 114]{Kbook}) the measure $U_{\Phi(q)}(x+\td y)$ converges weakly
to the Lebesgue measure $\frac{1}{E^{\Phi(q)}H(1)}\td y$ (see Kyprianou~\cite[Th.\ 7.6 on p.\ 185]{Kbook}).
Thus
$$
\lim_{x\to\infty}\te{\Phi(q)x}P(\overline X(e_q)>x)=\frac{\kappa(q,0)}{\Phi(q)E^{\Phi(q)}H(1)}.
$$
Observe that
\begin{multline*}
E^{\Phi(q)}H(1)=\int_0^\infty t \te{-t}E^{\Phi(q)}H(1)\td t =\int_0^\infty \te{-t}E^{\Phi(q)}H(t)\td t= \int_0^\infty x U^{(1)}(\td x)\\
= \int_0^\infty \te{-t}\td t  \int_0^\infty x  P^{\Phi(q)}(H(t)\in \td x) =\int_0^\infty \te{-t}\int_0^\infty x \te{\Phi(q)x -qs}  P(H(t)\in \td x, L^{-1}(t)\in \td s)\td t\\
=\int_0^\infty \te{-t}E H(t)\te{\Phi(q)H(t) -qL^{-1}(t)}\td t =\int_0^\infty t\te{-t-\kappa(q,-\Phi(q))t}\td t
\left.\frac{\partial \kappa(q,\beta)}{\partial \beta}\right|_{\beta=-\Phi(q)}.
\end{multline*}
From the Wiener-Hopf factorisation (see Kyprianou~\cite[p.\ 167]{Kbook}) it follows
that
$$
q-\psi(\theta)=\kappa(q,-\theta)\hat{\kappa}(q,\theta).
$$
From the convexity of the Laplace exponents $\phi$ and $\psi$ we have that $\hat{\kappa}(q,\Phi(q))=2\Phi(q)>0$ and
hence $\kappa(q,-\Phi(q))=0$. Finally,
$$
E^{\Phi(q)}H(1)=\left.\frac{\partial \kappa(q,\beta)}{\partial \beta}\right|_{\beta=-\Phi(q)}.
$$
Note that by \eqref{cramer1} and \eqref{cramer3}, $P(X(e_q)>x)={\rm o}(\te{-\Phi(q)x})$ and $P(X(e_q)+F(U)>x)={\rm o}(\te{-\Phi(q)x})$.
Inequalities (\ref{bound3}) and (\ref{bound4}) in Lemma \ref{tailbounds1}
complete the proof.
\end{proof}

\phantomsection
\addcontentsline{toc}{section}{Acknowledgments}
\subsection*{Acknowledgments}
We thank Bert Zwart for his helpful advice and useful comments. This work is partially supported by the Ministry of Science and Higher Education of Poland under the grant N N2014079 33 (2007-2009).


\phantomsection
\addcontentsline{toc}{section}{References}

\end{document}